\newfont{\cyrr}{wncyr10}
\newcommand{\thmref}[1]{Theorem~\ref{#1}}
\newcommand{\propref}[1]{Proposition~\ref{#1}}
\newcommand{\lemref}[1]{Lemma~\ref{#1}}
\newtheorem{thm}{Theorem}
\newtheorem{lem}[thm]{Lemma}
\newtheorem{prop}[thm]{Proposition}
\newtheorem{rmk}{Remark}[section]
\newtheorem{conj}{Conjecture}
\newcommand{\cF}{{\mathcal{F}}}
\def\({\left(}
\def\){\right)}
\def\[{\left[}
\def\]{\right]}
\def\N{\mathbb{N}}
\def\Q{\mathbb{Q}}
\def\F{\mathbb{F}}
\def\cJ{\mathcal{J}}
\def\cR{\mathcal{R}}
\renewcommand{\mod}{ \text{ mod }}
\newcommand{\Li}{\text{Li}}
\newcommand{\tr}{\text{tr}}
\title{Matsuda monoids and Artin's primitive root conjecture}
\author{Sunil Naik}
\address{
	Department of Mathematics,
	Queen's University, Jeffrey Hall, 
	99 University Avenue, 
	Kingston, ON K7L3N6, 
	Canada}
\email{naik.s@queensu.ca}
\begin{document}
	
\hfuzz 5pt	
	
\subjclass[2020]{11A07, 11N56, 11N69, 11R45}
	
\keywords{Matsuda monoid, Primitive roots, Artin's conjecture, Cyclotomic polynomial, Chebotarev density theorem}	
	
	\maketitle

\begin{abstract}
Let $M \subseteq \mathbb{N}_{0}$ be the additive submonoid generated by $2$ and $3$. 
In a recent work,  Christensen,  Gipson and Kulosman proved that 
$M$ is not a Matsuda monoid of type $2$ and type $3$ and 
they have raised the question of whether $M$ is a 
Matsuda monoid of type $\ell$ for any prime $\ell$. 
Assuming the generalized Riemann hypothesis, 
Daileda showed that $M$ is not a Matsuda monoid 
of type $\ell$ for any prime $\ell$. 
In this article, we will establish this result unconditionally using its' connection with Artin's primitive root conjecture and this resolves the question of Christensen, Gipson and Kulosman. 
\end{abstract}	

\section{Introduction and Statements of Results}
The study of irreducible polynomials in various polynomial rings is an important chapter in mathematics. In this context, polynomials with restricted exponents have become the focus of considerable attention in recent years. Motivated by these considerations, R. Matsuda \cite{Ma} introduced polynomial rings
$$
F[X; M] ~=~ \left\{\sum_{i=1}^{n} c_i X^{\alpha_i} ~:~ c_i \in F,~~ \alpha_i \in M,~~ n \in \N \right\},
$$
where $F$ is a field, $M$ is a commutative, torsion free, cancellative additive monoid and began an inquiry into the irreducibility of various polynomials in these rings. 

For example, a non-zero element $\alpha \in M$ is said to be {\bf divisible} if $nz=\alpha$ is solvable in $M$ for some positive integer $n \geq 2$. Otherwise, we say $\alpha$ is {\bf indivisible}. It is easy to see that if $\alpha$ is divisible, then $X^{\alpha} -1 $ is reducible. Indeed, $X^{\alpha} -1 = (x^z - 1)(x^{z(n-1)} + x^{z(n-2)} + \cdots + x^z + 1)$. Therefore, if $X^{\alpha} -1 $ is irreducible, then $\alpha$ must be indivisible. The question arises whether the converse is true. That is, if $\alpha$ is indivisible, then is it true that 
$X^{\alpha} -1 $ is irreducible? This motivates the following.

A cancellative torsion-free commutative monoid $M$ is said to be a Matsuda monoid if for each indivisible $\alpha \in M$, the polynomial $X^{\alpha}-1$ is irreducible in $F[X;M]$ for any field $F$. Let $\ell$ be a prime number. We say that $M$ is a Matsuda monoid of type $0$ (resp. type $\ell$) if for each indivisible $\alpha \in M$, the polynomial $X^{\alpha}-1$ is irreducible in $F[X;M]$ for any field $F$ of characteristic $0$ (resp. characteristic $\ell$). In \cite{Ma}, R. Matsuda showed that any torsion-free abelian group is a Matsuda monoid. In \cite{CGK}, K. Christensen, R. Gipson and H. Kulosman  showed that the  additive submonoid $M$ of $\N_{0}$ generated by $2$ and $3$ is not a Matsuda monoid of type $2$ and type $3$, since the polynomial $x^7-1$ factorizes in $\F_2[X ; M]$ as
$$
X^7 ~-~ 1~=~ (X^4+X^3+X^2+1) (X^3+X^2+1)  
$$
and the polynomial $X^{11}-1$ factorizes in $\F_3[X ; M]$ as
$$
X^{11} ~-~ 1 ~=~ (X^6-X^5-X^4-X^3+X^2+1) (X^5+X^4-X^3+X^2-1)
$$
and $7, 11$ are clearly indivisible elements in $M$.
Further, Christensen, Gipson and Kulosman raised the following question : Whether the monoid $M$ is a Matsuda monoid of any positive type?

Motivated by the above results, one is interested in the following set:
$$
E(\ell) ~=~ \left\{ p \text{ prime } ~:~ X^p -1 \text{ is reducible in } \F_{\ell}[X;M] \right\}.
$$
The above mentioned result of Christensen, Gipson and Kulosman shows that $7 \in E(2)$ and $11 \in E(3)$. We are interested in the following questions: Whether $E(2)$ and $E(3)$ contain infinitely many primes? Is it true that $E(\ell)$ contains infinitely many primes for any prime $\ell$?

Assuming the generalized Riemann hypothesis (GRH), R. C. Daileda \cite{Da} recently showed that the set $E(\ell)$ has a positive lower density for all primes $\ell$. In this paper, we will show {\bf unconditionally} that the set $E(\ell)$ has a positive lower density for all primes $\ell$ and this resolves the question of Christensen, Gipson and Kulosman. More precisely, we prove the following theorem.
\begin{thm}\label{thm2}
Let $\ell$ be a prime number. Then the set
$$
E(\ell) ~=~ \left\{ p \text{ prime } ~:~ X^p -1 \text{ is reducible in } \F_{\ell}[X;M] \right\}
$$
has lower density at least 
$$
1 ~-~ \prod_{p \geq \ell}\(1 - \frac{1}{p(p-1)}\).
$$
Further, the set $E(3)$ has lower density at least
$$
1 ~-~ \prod_{p}\(1 - \frac{1}{p(p-1)}\).
$$
\end{thm}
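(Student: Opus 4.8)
The plan is to translate the reducibility of $X^p-1$ in $\F_\ell[X;M]$ into an arithmetic condition on $p$ and then count the qualifying primes with the Chebotarev density theorem. Since $M=\N_0\setminus\{1\}$, the ring $\F_\ell[X;M]$ consists precisely of the polynomials in $\F_\ell[X]$ whose coefficient of $X$ vanishes, and in particular $X^p-1$ lies in it. Fixing a prime $p\neq\ell$ and a primitive $p$-th root of unity $\zeta\in\overline{\F_\ell}$, every monic factor of $X^p-1$ over $\F_\ell$ has the form $f_T(X)=\prod_{j\in T}(X-\zeta^{j})$ for a subset $T\subseteq\Z/p\Z$ stable under multiplication by $\ell$ (the Frobenius action). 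A short computation with elementary symmetric functions gives that the coefficient of $X$ in $f_T$ equals $\pm\bigl(\prod_{j\in T}\zeta^{j}\bigr)\sum_{j\in T}\zeta^{-j}$, so $f_T\in\F_\ell[X;M]$ exactly when $\sum_{j\in T}\zeta^{-j}=0$. As $\sum_{j\in\Z/p\Z}\zeta^{-j}=0$, this holds for $T$ if and only if it holds for its complement; hence $X^p-1$ is reducible in $\F_\ell[X;M]$ if and only if there is a Frobenius-stable $T$ with $\emptyset\neq T\neq\Z/p\Z$ and $\sum_{j\in T}\zeta^{-j}=0$ (note that such a $T$ automatically has $|T|\ge 2$, since a single term $\zeta^{-j}$ is nonzero).

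The crucial step is to produce such a $T$ from hypotheses on $\ell$ modulo $p$. Write $i_p(\ell)=(p-1)/\ord_p(\ell)$ for the index of $\langle\ell\rangle$ in $(\Z/p\Z)^\times$, and suppose a prime $q$ divides $i_p(\ell)$, equivalently that $\ell$ is a $q$-th power residue modulo $p$. Then there is a subgroup $H$ with $\langle\ell\rangle\le H\le(\Z/p\Z)^\times$ of index $q$, and since $\ell\in H$ the Frobenius fixes each coset of $H$ setwise. The $q+1$ sums $w_0=1$ (from the fixed point $0$) and $w_C=\sum_{j\in C}\zeta^{-j}$ over the $q$ cosets $C$ of $H$ therefore lie in $\F_\ell$ and satisfy $w_0+\sum_C w_C=0$. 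A union of the corresponding Frobenius-stable blocks whose $w$-values sum to zero gives the desired $T$, so it suffices to find a proper, nonempty, zero-sum sub-collection among these $q+1$ elements of $\F_\ell$. When $q\ge\ell$ I would invoke the Davenport constant $D(\Z/\ell\Z)=\ell$: any $\ell$ of the $q+1$ values contain a nonempty zero-sum subsequence, which is automatically proper because its length is at most $\ell\le q<q+1$. This shows $p\in E(\ell)$ whenever $i_p(\ell)$ has a prime factor $q\ge\ell$. For the sharper statement about $E(3)$ I would treat $\ell=3,\ q=2$ separately: the two quadratic periods are distinct modulo $3$ (their difference squares to $\pm p\not\equiv0\pmod 3$) and sum to $-1\equiv 2$, so $\{w_{C_0},w_{C_1}\}=\{0,2\}$ in $\F_3$, which again contains a zero-sum block.

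Finally I would convert this sufficient condition into a density bound. For each prime $q$ the set $A_q=\{p:\ q\mid p-1,\ \ell\text{ is a }q\text{-th power residue mod }p\}$ is, up to finitely many ramified primes, the set of primes splitting completely in $L_q=\Q(\zeta_q,\ell^{1/q})$; since $\ell$ is prime one has $[L_q:\Q]=q(q-1)$, so $A_q$ has density $1/(q(q-1))$ by Chebotarev. The fields $L_q$ are pairwise linearly disjoint over $\Q$, so for any finite set $S$ of primes the compositum has degree $\prod_{q\in S}q(q-1)$, and inclusion–exclusion yields, unconditionally, that $\bigcup_{q\in S}A_q$ has density $1-\prod_{q\in S}\bigl(1-\tfrac{1}{q(q-1)}\bigr)$. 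Because $E(\ell)\supseteq\bigcup_{q\ge\ell}A_q$, taking $S=\{q\ \text{prime}:\ \ell\le q\le Q\}$ and letting $Q\to\infty$ shows that the lower density of $E(\ell)$ is at least $1-\prod_{p\ge\ell}\bigl(1-\tfrac1{p(p-1)}\bigr)$; for $\ell=3$ the extra case $q=2$ permits $2\in S$, giving the product over all primes. The main obstacle is the reducibility criterion together with the zero-sum extraction, which is exactly where the hypothesis $q\ge\ell$ (and the hand analysis of $(\ell,q)=(3,2)$) is genuinely required. The counting step is then routine, and it stays unconditional precisely because only finitely many $L_q$ enter at once, so no uniform control of error terms, and hence no GRH, is needed.
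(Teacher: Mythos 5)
Your argument is correct, and while its combinatorial heart coincides with the paper's, the overall route is genuinely different and in places simpler. Your translation of reducibility in $\F_\ell[X;M]$ into the existence of a Frobenius-stable proper nonempty $T$ with vanishing period sum is equivalent to the paper's traceless-factor criterion (\lemref{lemredFlM}, \lemref{lemredXnPhip0}), and your zero-sum extraction via the Davenport constant $D(\F_\ell)=\ell$ is the same device as in the paper's \lemref{lemErIr}. But you apply it to the $q+1$ blocks formed by the cosets of an index-$q$ subgroup $H\supseteq\langle\ell\rangle$ together with $\{0\}$, keyed to a prime $q\ge\ell$ dividing the index $[\F_p^\times:\langle\ell\rangle]$, rather than to the $r$ irreducible factors of $\Phi_p$ keyed to the index $r\ge\ell$ itself; this detects exactly the union $\bigcup_{q\ge\ell}A_q$ that the paper's \thmref{thmindex} counts anyway, so the final bounds agree. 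The two substantive departures are these. First, your counting uses only the classical Chebotarev density theorem, applied to one finite compositum at a time, plus monotonicity in $S$ --- which is indeed all a \emph{lower} bound requires and is unconditional --- whereas the paper deploys the effective Chebotarev theorem of Lagarias--Odlyzko together with Stark's and Serre's bounds and a level $k$ growing with $x$; that heavier machinery is what powers the paper's upper-bound results (\thmref{thmArtin}, \thmref{thmG}) but is not strictly needed for \thmref{thm2}. Second, for $E(3)$ the paper routes through Daileda's \lemref{lemE23} combined with the unconditional upper bound \thmref{thmArtin}, while your hand computation of the two quadratic Gauss periods modulo $3$ (distinct, summing to $2$, hence equal to $\{0,2\}$ in $\F_3$) handles $q=2$, $\ell=3$ directly; as a byproduct your argument even reproves \lemref{lemE23} for $\ell\in\{2,3\}$. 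In short: the paper's approach buys the upper-bound theorems (of independent interest) and effective error terms; yours buys a shorter, softer, self-contained proof of the stated lower bounds.

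One justification should be tightened. You deduce $[\prod_{q\in S}L_q:\Q]=\prod_{q\in S}q(q-1)$ from \emph{pairwise} linear disjointness, but pairwise linear disjointness of Galois extensions does not in general give the product formula for composita (consider $\Q(\sqrt{2})$, $\Q(\sqrt{3})$, $\Q(\sqrt{6})$). The formula is nevertheless correct in every case you use, and the right reason is the Kummer-theoretic degree computation the paper cites as \cite[Eq.~12]{Ho}: for squarefree $d$ the only possible collapse in $[\Q(\zeta_d,\ell^{1/d}):\Q]=d\varphi(d)$ is by a factor of $2$, occurring precisely when $2\mid d$ and $\sqrt{\ell}\in\Q(\zeta_d)$, i.e.\ when the conductor of $\Q(\sqrt{\ell})$ divides $d$. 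This never happens for your $d$'s: for $\ell=2$ or $\ell\equiv 3\pmod 4$ the conductor ($8$ or $4\ell$) divides no squarefree integer, and for $\ell\equiv 1\pmod 4$ (so $\ell\ge 5$) all your $q\ge\ell$ are odd, whence $2\nmid d$. With that repair, the proposal stands.
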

\begin{rmk}
If one assumes the generalized Riemann hypothesis, then  for $\ell \in \{2, 3\}$, the result of Daileda shows that the density of $E(\ell)$ is $
1 - \prod_{p}\(1 - \frac{1}{p(p-1)}\)$. Thus for $\ell \in \{2, 3\}$, \thmref{thm2} gives the exact lower bound for the density of $E(\ell)$ {\bf unconditionally}.
\end{rmk}

\begin{rmk}
We want to mention to the reader that there are some misprints in section 6 in the article \cite{Da} by Daileda. Note that we are using different techniques to compute lower bounds for the densities of $E(\ell)$ and our results are {\bf unconditional}.
\end{rmk}	

It is interesting to note that the above problem on Matsuda monoids has an intimate connection with Artin's primitive root conjecture. In 1927, E. Artin conjectured that for any integer $a$ other than $\pm 1$ or a perfect square, there exist infinitely many primes $p$ such that $a$ is a primitive root $(\mod p)$ (see subsection \ref{secArtin} for more details). In 1967, assuming the generalized Riemann hypothesis,  C. Hooley \cite{Ho} proved  Artin's conjecture as well as an asymptotic formula for the number of such primes up to $x$. More precisely, under GRH, he showed that

$$
N_a(x) ~=~ \#\{ p \leq x ~:~ a \text{ is a primitive root } (\mod p)\}
~\sim~ A(a) \frac{x}{\log x},
$$ 
as $x \to \infty$. Here
$$
A(a) ~=~ \sum_{d=1}^{\infty} \frac{\mu(d)}{n_d} ~=~ \delta(a) \prod_{p} \(1- \frac{1}{n_p}\),
$$
where $n_d = [ L_d :\Q]$, $L_d = \prod_{p \mid d} \Q(e^{2 \pi i/p}, a^{1/p})$  and
$\delta(a)$ is defined as follows: write $a=bc^2$ with $b$ square-free, then
$$
\delta(a) ~=~ 
\begin{cases}
1 & \text{if } b \not\equiv 1 (\mod 4), \\
1 - \mu(b) \prod_{p \mid b} \frac{1}{n_p -1} & \text{if } b \equiv 1 (\mod 4).
\end{cases}
$$
We will show that the assumption of GRH in the {\bf upper bound} of Hooley's result {\bf can be removed}. More precisely, we prove the following theorem.
\begin{thm}\label{thmArtin}
Let $a$ be a non-zero integer not equal to $\pm 1$ or a perfect square. Then the set of primes $p$ such that $a$  is a primitive root $(\mod p)$
has upper density at most $A(a)$.
\end{thm}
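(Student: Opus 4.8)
The plan is to realize the primitive-root condition through splitting behaviour in the fields $L_d$ and then to apply the \emph{unconditional} Chebotarev density theorem after truncating the infinitely many local conditions to a finite set. Write $\mathcal{A}=\{p : a \text{ is a primitive root } (\mod p)\}$ and discard the finitely many $p\mid a$. For $p\nmid a$ the element $a$ is a primitive root $(\mod p)$ exactly when its order in $\F_p^\times$ equals $p-1$, which fails if and only if there is a prime $q\mid p-1$ with $a^{(p-1)/q}\equiv 1\,(\mod p)$, i.e.\ with $a$ a $q$-th power residue modulo $p$. Setting $P_q=\{p : p\equiv 1\,(\mod q),\ a\in(\F_p^\times)^q\}$, this shows that, up to finitely many exceptions, $p\in\mathcal{A}$ if and only if $p\notin\bigcup_{q}P_q$ over all primes $q$. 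The classical point is that, away from the finitely many ramified primes, $P_q$ is precisely the set of primes splitting completely in $L_q=\Q(\zeta_q,a^{1/q})$, so by Chebotarev $P_q$ has natural density $1/n_q$.

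Next I would truncate. For any bound $y$ we have $\mathcal{A}\subseteq S_y := \{p : p\notin P_q \text{ for all } q\le y\}$, since we have only dropped the conditions coming from $q>y$ (which can only enlarge the set). Because $S_y$ involves only finitely many primes $q$, inclusion--exclusion expresses its density through the intersections $\bigcap_{q\mid d}P_q$ for squarefree $d\mid k_y$, where $k_y=\prod_{q\le y}q$; each such intersection is exactly the set of primes splitting completely in the compositum $L_d=\prod_{q\mid d}\Q(\zeta_q,a^{1/q})$, a \emph{single} number field, so Chebotarev gives it natural density $1/n_d$ \emph{unconditionally}. Summing with the sign $\mu(d)=(-1)^{\omega(d)}$ then shows that $S_y$ has natural density $T_y:=\sum_{d\mid k_y}\mu(d)/n_d$, and hence $\overline{d}(\mathcal{A})\le T_y$ for every $y$.

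Finally I would let $y\to\infty$. To pass to the limit I need the series $\sum_{d\ge 1}\mu(d)/n_d$ to converge absolutely; this follows from the standard lower bound $n_d\gg_a d\,\varphi(d)$ for squarefree $d$ (Kummer theory over $\Q(\zeta_d)$ shows that $[L_d:\Q(\zeta_d)]$ differs from $\prod_{p\mid d}p$ by a factor bounded in terms of $a$, the hypothesis that $a$ is not a perfect square ruling out the degenerate case), whence $\sum_d 1/n_d\ll_a \sum_d 1/(d\,\varphi(d))<\infty$. Absolute convergence forces the $y$-smooth partial sums $T_y$ to converge to $A(a)=\sum_{d=1}^\infty \mu(d)/n_d$. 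Combining this with the uniform bound $\overline{d}(\mathcal{A})\le T_y$ and letting $y\to\infty$ yields $\overline{d}(\mathcal{A})\le A(a)$, as claimed.

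I expect the only genuinely delicate point to be the legitimacy of the truncation: it is precisely by retaining \emph{finitely many} congruence conditions that one stays inside a single finite Galois extension where Chebotarev applies unconditionally, at the cost of obtaining only an upper bound. The tail contributions from $q>y$ are simply discarded rather than estimated, so—unlike in Hooley's two-sided asymptotic—no GRH-dependent control of the sieve error over the large primes $q$ is required. The remaining work is bookkeeping: accounting for the finitely many primes dividing $a$ or ramifying in each $L_d$, and verifying the degree bound on $n_d$ that secures absolute convergence.
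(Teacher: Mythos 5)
Your proof is correct, and it reaches the stated bound by a genuinely softer route than the paper's. Both arguments share the same sieve skeleton --- Hooley's inclusion--exclusion over squarefree $d \mid k$ of the complete-splitting conditions in $L_d = \prod_{q \mid d} \Q(\zeta_q, a^{1/q})$, together with the lower bound $n_d \gg_a d\,\varphi(d)$ that makes $\sum_d \mu(d)/n_d$ absolutely convergent --- but they diverge at the truncation. You fix the truncation level $y$, invoke only the classical (qualitative, unconditional) Chebotarev density theorem for the finitely many fields $L_d$ with $d \mid k_y$, conclude that the upper density of $\mathcal{A}$ is at most $T_y$ for each $y$, and then let $y \to \infty$; the limit interchange is legitimate precisely because, for fixed $y$, the inclusion--exclusion has a bounded number of terms, and absolute convergence gives $T_y \to A(a)$ since every squarefree $d \nmid k_y$ exceeds $y$. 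The paper instead takes the truncation $z = \log\log x/(8c_4)$ \emph{growing with} $x$, which forces the effective Chebotarev theorem of Lagarias and Odlyzko, Stark's bound on Siegel zeros, and Serre's discriminant estimate in order to control the error in $\pi_d(x)$ uniformly over all $d \mid k$. What that extra machinery buys is a quantitative conclusion: the paper obtains $N_a(x) \leq A(a)\,\Li(x) + O\bigl(x \log_4 x/(\log x \log_2 x)\bigr)$, an explicit error term mirroring the shape of Hooley's conditional asymptotic, whereas your argument yields exactly the upper-density inequality claimed in the theorem and no rate. One small imprecision, harmless here: the hypothesis that $a$ is not a perfect square is what makes the primitive-root problem (and the positivity of $A(a)$) nondegenerate, not what secures the bound $n_d \gg_a d\,\varphi(d)$, which holds for any $a \neq 0, \pm 1$ with the implied constant depending on $a$.
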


In 2004, S. W. Golomb related a generalization of Artin's primitive root conjecture to M. Ram Murty \cite{Go} (see subsection \ref{secArtin}). 
Assuming GRH, C. Franc and M. Ram Murty \cite{FM} proved that if $a$ is a positive integer which is not an $\ell$-th power for any prime $\ell$, then the density of primes $p \equiv 1 ~(\mod r)$ such that the order of $a~ (\mod p)$ is $(p-1)/r$ is  equal to
$$
A(a,r) ~=~ \sum_{k \geq 1} \frac{\mu(k) m(k)}{r k \varphi(rk)},
$$
where
$$
m(k) ~=~
\begin{cases}
2 & \text{if $rk$ is even and } \sqrt{-a} \in \Q\(\zeta_{2rk}\) \\
1 & \text{otherwise.}
\end{cases}
$$
In this context, unconditionally, we prove the following theorem.
\begin{thm}\label{thmG}
Let $a$ be a positive integer which is not an $\ell$-th power for any prime $\ell$ and let $r$ be a positive integer. Then the set of primes $p$ such that $p \equiv 1 ~(\mod r)$ and the order of $a~ (\mod p)$ is $(p-1)/r$ 
has upper density at most $A(a,r)$.
\end{thm}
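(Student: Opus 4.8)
The plan is to reduce the order condition to a collection of complete-splitting conditions in Kummer extensions and then to apply the Chebotarev density theorem unconditionally, exploiting a one-sided truncation to avoid the tail estimates that force Hooley-type arguments to invoke GRH. Fix a prime $p \nmid a$ and let $t = [(\Z/p\Z)^\times : \langle a \rangle]$ be the index of the subgroup generated by $a$; the requirements $p \equiv 1 ~(\mod r)$ and that the order of $a$ modulo $p$ be $(p-1)/r$ together say precisely that $t = r$ (the congruence $r \mid p-1$ is automatic once $t=r$). Writing $(\Z/p\Z)^\times = \langle g \rangle$ and $a \equiv g^m$, one has $t = \gcd(p-1,m)$, whence for any positive integer $n$ the divisibility $n \mid t$ is equivalent to the pair of conditions $n \mid p-1$ and ``$a$ is an $n$-th power modulo $p$''. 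These two conditions together say exactly that $p$ splits completely in the Galois field $L_n := \Q(\zeta_n, a^{1/n})$, the splitting field of $X^n - a$ (away from the finitely many ramified primes, which do not affect density).

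Next I would express the target set through inclusion--exclusion, but only one-sidedly. Since $t = r$ forces $rq \nmid t$ for every prime $q$, the set $S_r := \{p : t = r\}$ is contained, for each threshold $y$, in the set $T_y$ of primes with $r \mid t$ and $rq \nmid t$ for all primes $q \le y$. The indicator of $T_y$ is the \emph{finite} M\"obius sum
$$
\mathbf{1}[T_y](p) ~=~ \sum_{\substack{d \text{ squarefree} \\ P(d) \le y}} \mu(d)\, \mathbf{1}[rd \mid t](p),
$$
where $P(d)$ is the largest prime factor of $d$. Because the sum is finite for each fixed $y$, I may apply Chebotarev term by term: the density of primes with $rd \mid t$ is $1/[L_{rd}:\Q]$. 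Summing the finitely many contributions gives
$$
\limsup_{x \to \infty} \frac{\#\{p \le x : p \in S_r\}}{\pi(x)} ~\le~ \sum_{\substack{d \text{ squarefree} \\ P(d) \le y}} \frac{\mu(d)}{[L_{rd}:\Q]}.
$$

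The remaining input is the degree computation, which is standard Kummer theory: since $a$ is not a perfect power, $[L_{rd}:\Q] = rd\,\varphi(rd)/m(d)$, where $m(d)\in\{1,2\}$ records the single admissible degeneracy, namely $\sqrt{-a} \in \Q(\zeta_{2rd})$ when $rd$ is even, exactly the quantity in the definition of $A(a,r)$ (this is the degree computation recorded in \cite{FM}). Substituting, the right-hand side becomes a partial sum $\sum_{P(d)\le y} \mu(d)m(d)/(rd\,\varphi(rd))$ of the series defining $A(a,r)$, whose terms are $O\(1/(d\varphi(d))\)$ and hence absolutely summable. Since the displayed inequality holds for every $y$ while its left-hand side is independent of $y$, letting $y \to \infty$ and invoking absolute convergence yields $\limsup_{x} \#S_r(x)/\pi(x) \le A(a,r)$, as claimed.

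I expect the conceptual heart of the argument to be the one-sided truncation in the second step: discarding the conditions $rq \nmid t$ for $q > y$ only enlarges the set, hence can only increase the count, so no control over the contribution of large auxiliary primes $q$ is ever required. This is precisely the place where obtaining the exact asymptotic (and thus a matching lower bound) would demand an effective Chebotarev estimate uniform in the conductor, i.e.\ GRH; for the upper bound alone, the finiteness of the truncated sum lets unconditional Chebotarev suffice. The only routine verifications are the Kummer degree formula with its parity/$\sqrt{-a}$ correction and the absolute convergence of the defining series, neither of which poses a genuine obstacle.
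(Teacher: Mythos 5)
Your proposal is correct, and it proves exactly the stated theorem; the algebraic skeleton is the same as the paper's (the equivalence $n \mid [\F_p^\times : \langle a \rangle] \iff p$ splits completely in $\Q(\zeta_n, a^{1/n})$, the one-sided inclusion--exclusion over squarefree $d$, and the Kummer degree formula $[L_{rd}:\Q] = rd\,\varphi(rd)/m(d)$ cited from Franc--Murty), but the analytic engine is genuinely different. The paper lets the truncation grow with $x$, taking $k = \prod_{q \le z} q$ with $z \asymp \log\log x$; since the number fields then vary with $x$, this forces the effective Chebotarev theorem of Lagarias--Odlyzko (Theorem \ref{thmLO}), Stark's Siegel-zero bound (Theorem \ref{thmStb}), and Serre's discriminant estimate (Proposition \ref{propDL}), together with the verification that $x \geq \exp(10\, n_{rd}(\log D_{rd})^2)$ holds for all $d \mid k$. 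You instead freeze the truncation at a fixed $y$, apply the classical (ineffective) Chebotarev theorem to the finitely many fields $L_{rd}$ with $P(d) \le y$, take $\limsup$ in $x$ first, and only then let $y \to \infty$ using absolute convergence of the series defining $A(a,r)$; this exchange of limits is legitimate precisely because the containment $S_r \subseteq T_y$ is one-sided, so no control of the discarded conditions for $q > y$ is needed. What each approach buys: yours is considerably more elementary and self-contained, needing no effective machinery and no discussion of exceptional zeros, and it delivers the qualitative upper-density bound that the theorem actually asserts; the paper's argument is heavier but yields a quantitative refinement, namely $N_{a,r}(x) \le A(a,r)\,\Li(x) + O\big(x \log_4 x/(\log x \log_2 x)\big)$, i.e.\ an explicit rate of convergence in the spirit of Hooley's conditional asymptotic, which a fixed-truncation argument cannot produce.
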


The following theorem also plays an important role in the resolution of the question of Christensen, Gipson and Kulosman.
\begin{thm}\label{thmindex}
	Let $\ell$ be prime number. Then the set of primes $p$ such that $[\F_p^\times : \langle \ell \rangle] ~\geq~ \ell$
	has lower density at least
	$$
	1- \prod_{p \geq \ell} \(1- \frac{1}{p(p-1)}\).
	$$
\end{thm}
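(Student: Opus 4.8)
The plan is to pass to the complementary set and reduce to the unconditional splitting-density computation already used for \thmref{thmArtin}. Write $I(p) = [\F_p^\times : \langle \ell \rangle] = (p-1)/\ord_p(\ell)$ for the index in question. The elementary but crucial observation is that if some prime $q \geq \ell$ divides $I(p)$, then $I(p) \geq q \geq \ell$; hence
$$
\{p : I(p) < \ell\} ~\subseteq~ \{p : q \nmid I(p) \text{ for every prime } q \geq \ell\},
$$
and it suffices to bound the upper density of the right-hand set.

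Next I would rewrite the divisibility condition as a splitting condition. For a prime $q$ and a prime $p$ not dividing $q\ell$, one has $q \mid I(p)$ if and only if $p \equiv 1 \pmod q$ and $\ell$ is a $q$-th power modulo $p$, which is exactly the statement that $p$ splits completely in the Galois field $K_q := \Q(\zeta_q, \ell^{1/q})$. Since $\ell$ is prime and therefore not a perfect power, $[K_q : \Q] = q(q-1) =: n_q$, so by the Chebotarev density theorem the set $\{p : q \mid I(p)\}$ has density $1/n_q$.

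I would then truncate to obtain an unconditional upper bound. Fix a finite bound $Q$. The set of primes $p$ with $q \nmid I(p)$ for all primes $\ell \leq q \leq Q$ is precisely the set of $p$ that split completely in none of the fields $K_q$ with $\ell \leq q \leq Q$; applying the Chebotarev density theorem to their compositum shows that this set has density $\prod_{\ell \leq q \leq Q}\(1 - 1/n_q\)$. Since $\{p : I(p) < \ell\}$ is contained in this set for every $Q$, its upper density is at most $\prod_{\ell \leq q \leq Q}\(1 - 1/n_q\)$; letting $Q \to \infty$ gives
$$
\overline{d}\(\{p : I(p) < \ell\}\) ~\leq~ \prod_{q \geq \ell}\(1 - \frac{1}{q(q-1)}\).
$$
Passing to complements yields the asserted lower density for $\{p : I(p) \geq \ell\}$.

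The main obstacle is the degree and independence input in the third step: one must know that the compositum of the $K_q$ over primes $\ell \leq q \leq Q$ has degree exactly $\prod_{\ell \leq q \leq Q} q(q-1)$, i.e.\ that these fields are mutually linearly disjoint, so that the Chebotarev density factors as $\prod \(1 - 1/n_q\)$. This is where restricting to $q \geq \ell$ is essential: it excludes $q = 2$ and hence the quadratic field $\Q(\sqrt{\ell})$, which is the only source of entanglement with the cyclotomic fields (for instance $\sqrt{\ell} \in \Q(\zeta_\ell)$ when $\ell \equiv 1 \pmod 4$); for the remaining odd primes $q \geq \ell$ the Kummer generators $\ell^{1/q}$ together with the fields $\Q(\zeta_q)$ are independent and the full degree is attained. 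This linear disjointness is the technical heart of the argument, and it is the same computation that underlies the evaluation of the relevant Artin-type constant in \thmref{thmArtin}.
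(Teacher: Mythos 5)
Your proposal is correct, and it takes a genuinely different --- and more elementary --- analytic route than the paper. The algebraic core is the same in both arguments: the observation that $q \mid [\F_p^\times : \langle \ell \rangle]$ for some prime $q \geq \ell$ forces the index to be at least $\ell$, the Dedekind splitting criterion for $K_q = \Q(\zeta_q, \ell^{1/q})$, inclusion--exclusion, and the full-degree formula $[K_d : \Q] = d\varphi(d)$ for squarefree $d$ supported on primes $\geq \ell$ (the paper cites Hooley's Eq.~12 for this). The difference is in how the infinitely many splitting conditions are handled. The paper bounds $S_\ell(x) = \#\{p \leq x : [\F_p^\times : \langle \ell \rangle] \geq \ell\}$ from below using a truncation $k = \prod_{\ell \leq q \leq z} q$ with $z = z(x) \asymp \log\log x$ growing with $x$; because the family of fields $K_d$ then grows with $x$, this forces the effective Chebotarev theorem of Lagarias--Odlyzko, together with Stark's bound on Siegel zeros and Serre's discriminant estimate, to make the error terms uniform. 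You instead fix a finite truncation $Q$, apply only the classical (ineffective) Chebotarev theorem to a fixed finite compositum, and let $Q \to \infty$ only after taking densities; this works precisely because the theorem asks for a one-sided bound, so the containment $\{p : I(p) < \ell\} \subseteq \{p : q \nmid I(p) \text{ for all primes } \ell \leq q \leq Q\}$ loses nothing in the limit. What you give up is the explicit rate of convergence that the paper's machinery produces (a secondary term of size $O(x \log_4 x/(\log x \log_2 x))$, as in \thmref{thmArtin}); what you gain is a proof requiring no effective Chebotarev input at all. Since the theorem asserts only a density inequality, your argument fully establishes it.

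One small caveat: your justification of linear disjointness (``restricting to $q \geq \ell$ excludes $q = 2$'') covers only odd $\ell$. When $\ell = 2$ the prime $q = 2$ is \emph{not} excluded, and the field $K_2 = \Q(\sqrt{2})$ does enter the compositum; the full degree $[K_d : \Q] = d\varphi(d)$ still holds for even squarefree $d$, but for a different reason --- $\Q(\sqrt{2})$ has conductor $8$, so $\sqrt{2}$ lies in no $\Q(\zeta_d)$ with $d$ squarefree (equivalently, in Hooley's criterion the squarefree part $2 \not\equiv 1 \pmod 4$ creates no entanglement). Either add this observation, or do what the paper does and dispose of $\ell = 2$ separately via \thmref{thmArtin}, noting that in that case the set in question is exactly the set of primes $p$ for which $2$ is not a primitive root $(\mod p)$.
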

\begin{rmk}
We remark that in \cite{RM}, M. Ram Murty showed that for a given prime $\ell$, the set of primes $p$ such that $\ell \mid [\F_p^\times : \Gamma_p]$ has density $1/n_\ell(\Gamma)$, where $\Gamma$ is the subgroup of $\Q^\times$ of rank $r$ generated by $a_1, \cdots, a_r$, $\Gamma_p = \Gamma (\mod p)$ and $n_\ell(\Gamma)$ is the degree of $\Q(e^{2\pi i/\ell}, a_1^{1/\ell}, \cdots, a_r^{1/\ell})$ over $\Q$.
\end{rmk}

\medspace

\section{Preliminaries}

\subsection{Prerequisites from primitive roots and Artin's conjecture}\label{secArtin}
Gauss in his {\em Disquisitiones Arithmeticae} noticed that for any prime $p$ not equal to $2$ and $5$, the period of the decimal expansion of $\frac{1}{p}$ is equal to the  smallest natural number $k$ such that
$$
10^k ~\equiv~ 1 ~(\mod p)
$$
and is called the order of $10 ~(\mod p)$. In terms of group-theoretic language, it is equal to the cardinality of the subgroup $\langle 10 \rangle$ of $\F_p^\times$ generated by $10$. Thus by Lagrange's theorem, the largest period of $1/p$ occurs when the order of $10 ~(\mod p)$ is equal to $p-1$. In this case, we say that $10$ is a primitive root $(\mod p)$. More generally, an integer $a$ coprime to $p$ is called a primitive root $(\mod p)$, if $\F_p^\times = \langle a \rangle $. 
A precise conjecture was formulated by E. Artin \cite{Ar} in 1927.
\begin{conj}
For any non-zero integer $a$ not equal to $\pm 1$ or a perfect square, there exist infinitely many primes $p$ for which $a$ is a primitive root $(\mod p)$. Further,
$$
N_a(x) ~=~ \#\{ p \leq x ~:~ a \text{ is a primitive root }(\mod p) \} ~\sim~ A(a) \frac{x}{\log x},
$$
as  $x \to \infty$,
where $A(a)$ is a constant depending on $a$.	
\end{conj}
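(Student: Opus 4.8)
The plan is to follow Hooley's strategy and reduce the counting of primitive roots to counting primes with prescribed splitting behaviour in the family of Kummer extensions $L_d$, and then to extract the main term $A(a)\,x/\log x$ by M\"obius inversion. First I would record the arithmetic reformulation: for a prime $p \nmid a$, the integer $a$ fails to be a primitive root $(\mod p)$ precisely when some prime $q$ divides the index $[\F_p^\times : \langle a \rangle]$, and this happens if and only if $p$ splits completely in $L_q = \Q(e^{2\pi i/q}, a^{1/q})$. Writing $\pi_d(x)$ for the number of primes $p \le x$ that split completely in $L_d$, inclusion--exclusion over squarefree $d$ gives $N_a(x) = \sum_{d}\mu(d)\,\pi_d(x)$, with only finitely many ramified or exceptional primes discarded. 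Note that the upper-bound half of the asymptotic is already unconditional by \thmref{thmArtin}, so the genuinely new content here is the matching lower bound, equivalently the infinitude of the counted primes.

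Next I would isolate the main term. The Chebotarev density theorem, applied to the normal extensions $L_d/\Q$, gives $\pi_d(x) \sim \tfrac{1}{n_d}\Li(x)$, and summing these main terms over all squarefree $d$ yields $\big(\sum_d \mu(d)/n_d\big)\Li(x) = A(a)\,\Li(x) \sim A(a)\,x/\log x$. Convergence of $\sum_d \mu(d)/n_d$ is guaranteed by the lower bound $n_d \gg d\,\varphi(d)$, valid up to a bounded entanglement factor coming from the interaction of the cyclotomic and radical parts, which is exactly what the correction $\delta(a)$ records. Thus $A(a)\,x/\log x$ is the correct candidate, and the whole problem reduces to showing that the total error incurred in replacing each $\pi_d(x)$ by its main term is $o(x/\log x)$.

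The decisive step is the control of this error, which I would organise by splitting the range of $d$ into three pieces. For small $d$ (say $d \le \tfrac16\log x$) I would invoke an effective Chebotarev theorem: under GRH for the Dedekind zeta function of $L_d$, the error in $\pi_d(x)$ is $O\!\big(\sqrt{x}\,\log(d\,x)\big)$, and summing over this short range costs only $o(x/\log x)$. For a medium range, up to about $\sqrt{x}/\log^2 x$, I would drop to the upper bound $\pi_d(x) \le \pi(x;d,1)$ coming from the congruence $d \mid p-1$, and estimate the sum by the Brun--Titchmarsh inequality, again $o(x/\log x)$. For large $d$ I would use the elementary observation that a prime $p \le x$ splitting completely in $L_d$ forces $d \mid p-1$ together with $a$ being a $d$-th power $(\mod p)$, so such $p$ are sparse and contribute $o(x/\log x)$ by a direct count.

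The hard part is precisely the medium range of $d$. Unconditionally one has only the Brun--Titchmarsh \emph{upper} bound for $\pi(x;d,1)$, which suffices to bound $N_a(x)$ from above --- this is exactly the mechanism behind the unconditional \thmref{thmArtin} --- but it yields no lower bound and no asymptotic. Securing the asymptotic, and hence the infinitude, requires genuinely controlling the Chebotarev error for $d$ as large as $\sqrt{x}$, and no unconditional estimate of that strength is presently known; this is the sole reason GRH cannot be removed here. The realistic outcome of this plan is therefore a proof of the full conjecture \emph{under GRH} (Hooley's theorem), together with the recognition that the unconditional results of this paper are deliberately confined to the upper-density side, where the Brun--Titchmarsh inequality alone does the work.
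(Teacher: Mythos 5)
The statement you were given is stated in the paper as a \emph{conjecture}, and the paper contains no proof of it: it recounts Artin's heuristic, cites Hooley's GRH-conditional theorem, and proves unconditionally only the upper-density bound (\thmref{thmArtin}). Your assessment is therefore exactly right, and your sketch is a faithful reconstruction of Hooley's argument: the reduction to complete splitting in $L_q=\Q(\zeta_q,a^{1/q})$, the M\"obius-inverted main term $A(a)\Li(x)$ with convergence from $n_d\gg d\varphi(d)$, the three ranges of $d$, and the correct identification of the medium range (Chebotarev error for $d$ up to roughly $\sqrt{x}$) as the sole place where GRH is indispensable for the asymptotic and hence for the lower bound. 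Nobody can do better than this at present, and your refusal to claim an unconditional proof is the right conclusion.

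One correction, though, about how you characterize the unconditional side. You write that in \thmref{thmArtin} ``the Brun--Titchmarsh inequality alone does the work,'' but that is not the mechanism the paper uses, and it could not be: in the truncated inclusion--exclusion $N_a(x)\le\sum_{d\mid k}\mu(d)\pi_d(x)$ the M\"obius signs alternate, so one needs \emph{two-sided} asymptotics $\pi_d(x)=\Li(x)/n_d+\text{error}$ for every $d\mid k$, which an upper-bound sieve inequality cannot supply. The paper instead takes $k=\prod_{q\le z}q$ with the very short truncation $z\asymp\log\log x$, and controls each $\pi_d(x)$ unconditionally via the Lagarias--Odlyzko effective Chebotarev theorem combined with Stark's bound on the possible Siegel zero and Serre's discriminant estimate; Brun--Titchmarsh does not appear. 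In this upper-bound-only setting there is no medium range at all --- the price of unconditionality is paid in the quality of the error term ($O(x\log_4 x/(\log x\log_2 x))$ rather than Hooley's $O(x\log\log x/\log^2 x)$), not in a sieve step. Your medium-range Brun--Titchmarsh device belongs only to the conditional argument, where it discards terms of a single sign.
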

 This is famously known as Artin's primitive root conjecture. The intuition which led Artin to arrive at this conjecture can be described as follows (please refer to \cite{RM1983,RM1988} for more details): A non-zero integer $a$ is a primitive root $(\mod p)$ if and only if
 $$
 a^{\frac{p-1}{q}} ~\not\equiv~ 1 ~(\mod p)
 $$
 for every prime $q$ dividing $p-1$. Let $\zeta_q$ be a primitive $q$-th root of unity and $L_q =  \Q(\zeta_q, a^{1/q})$, then by Dedekind's theorem, $p$ splits completely in $L_q$ if and only if 
 $$
 a^{\frac{p-1}{q}} ~\equiv~ 1 ~(\mod p).
 $$
 Hence $a$ is a primitive root $(\mod p)$ if and only if $p$ does not split completely in any $L_q$. Note that $p$ splits completely in $L_q$ only when the Artin symbol $\(\frac{L_q/ \Q}{p}\)$ is trivial (see \cite[Ch. 1, p. 18]{La}) and
 by Chebotarev density theorem, the density of the set of primes $p$ which split completely in $L_q$ is $\frac{1}{n_q}$, where $n_q = [L_q : \Q]$. Hence, one heuristically expects that the density of the set of primes $p$ such that $a$ is a  primitive root $(\mod p)$ is
 $$
 \prod_{q} \(1- \frac{1}{n_q}\).
 $$
 However, the above expression did not match with calculations done by D. H. Lehmer and E. Lehmer \cite{LL} for certain values of $a$. Later, a correction factor was proposed by H.  Heilbronn \cite{MW} and was confirmed affirmatively by C. Hooley \cite{Ho} in 1967, subject to the generalized Riemann hypothesis.
 
The first unconditional result in this context was proved in 1984 by R. Gupta and M. Ram Murty \cite{GM} (also see \cite{MS}). They showed that there exists a set of $13$ numbers such that for at least one of these $13$ numbers, Artin's primitive conjecture is true. Later, R. Gupta, M. Ram Murty and V. Kumar Murty \cite{GMM} reduced the size of this set to $7$ and it was subsequently refined to $3$ by D. R. Heath-Brown \cite{HB}. Thus we have the following theorem.
\begin{thm}
One of $2, 3, 5$ is a primitive root $(\mod p)$ for infinitely many primes $p$.
\end{thm}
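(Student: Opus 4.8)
The plan is to follow the circle of ideas of Gupta--Murty and Heath-Brown, exploiting the freedom to choose among the three candidates $2, 3, 5$ rather than fixing a single $a$. The starting point is the characterization recalled in Subsection~\ref{secArtin}: an integer $a$ is a primitive root $(\mod p)$ precisely when $p$ splits completely in none of the fields $L_q = \Q(\zeta_q, a^{1/q})$ as $q$ ranges over primes dividing $p-1$. By inclusion--exclusion over squarefree $d$, the counting function $N_a(x)$ can be written as a main term, governed by $\sum_d \mu(d)/n_d$ through the Chebotarev density theorem applied to each $L_d$, plus error terms measuring the discrepancy between the actual number of primes $p \le x$ splitting completely in $L_d$ and its predicted count $x/(n_d \log x)$. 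Instead of analyzing one $a$, I would seek a lower bound for the sum
$$
N_2(x) ~+~ N_3(x) ~+~ N_5(x),
$$
since showing that this grows without bound forces at least one summand to be unbounded, which is exactly the assertion.

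The first step is to truncate the inclusion--exclusion at a parameter $z = z(x)$. For squarefree $d \le z$ the main term is supplied by Chebotarev, and since $2, 3, 5$ are multiplicatively independent and none of the relevant products among them is a perfect square, the fields $L_d$ have maximal degree $n_d = d\,\varphi(d)$ for each of the three values; this guarantees the main term matches the predicted density $A(a)$ and in particular is positive and of size $\gg x/\log x$. The contribution of squarefree $d > z$ built from small prime factors is controlled elementarily: a prime $p \le x$ with $p \equiv 1 \pmod d$ occurs for at most $\ll x/(\varphi(d)\log(x/d))$ primes, and summing this Brun--Titchmarsh bound over the tail keeps the error acceptable provided $z$ is chosen as a suitable small power of $x$.

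The decisive step, and the one that removes the need for GRH, is the estimation of the error terms in the medium range of moduli, where unconditional Chebotarev is too weak for a single field. Here I would bound these discrepancies \emph{on average} over the three choices $a \in \{2,3,5\}$ (equivalently, over the power-residue characters attached to $a^{1/q}$) by appealing to the large sieve inequality. Multiplicative independence ensures that the characters detecting ``$a$ is an $\ell$-th power $(\mod p)$'' are genuinely distinct and do not conspire, so the relevant bilinear character sums exhibit cancellation, and the large sieve converts this into a saving over the trivial bound. This is the technical heart of the Gupta--Murty--Heath-Brown method: the averaged error is smaller than the main term precisely because several independent generators are available simultaneously.

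The main obstacle will be making this averaged large-sieve bound beat the main term in the critical medium range of $d$; this is delicate, and it is exactly why the unconditional theorem asserts a primitive root among several candidates rather than for a single prescribed $a$. A subsidiary but necessary point is to verify the non-degeneracy hypotheses for $\{2,3,5\}$ --- that they are multiplicatively independent and that none of $6, 10, 15, 30$ (nor the associated quadratic fields entering $\delta(a)$, such as $\Q(\sqrt{5})$) causes the degrees $n_d$ to drop --- so that the main term genuinely remains $\gg x/\log x$. Granting these estimates, combining the main term, the elementary tail bound, and the large-sieve error yields $N_2(x) + N_3(x) + N_5(x) \gg x/\log x$, whence one of $2, 3, 5$ is a primitive root $(\mod p)$ for infinitely many primes $p$.
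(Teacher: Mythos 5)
Your proposal has a genuine gap, and it sits exactly where you flag it yourself. Keeping Hooley's inclusion--exclusion forces you to control, unconditionally, the primes $p \le x$ that split completely in $L_q = \Q(\zeta_q, a^{1/q})$ for $q$ ranging up to about $x^{1/2}$; the field $L_q$ has degree about $q(q-1)$, and the unconditional effective Chebotarev theorem (Theorem~\ref{thmLO}, the tool used throughout this paper) is applicable only for $q$ up to roughly a power of $\log\log x$ --- which is precisely why the paper can prove unconditional \emph{upper} density bounds (Theorem~\ref{thmArtin}) but not asymptotics. Averaging over the three fixed values $a \in \{2,3,5\}$ cannot repair this: the large sieve produces savings when one averages over a \emph{growing} family of characters or moduli, and a family of bounded size $3$ can gain at most a bounded factor over the single-$a$ case. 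No bilinear or large-sieve estimate of the kind your third paragraph invokes is known, and your admission that the ``main obstacle'' is making the averaged bound beat the main term in the medium range concedes the missing step: that estimate \emph{is} the theorem. A smaller inaccuracy: for $a=5$ the degrees $n_d$ do drop, since $\sqrt{5} \in \Q(\zeta_5)$ forces $n_d = d\varphi(d)/2$ whenever $10 \mid d$; this entanglement is exactly the source of the correction factor $\delta(a)$, harmless for the positivity of $A(5)$ but contrary to your claim that the degrees are maximal for all three values.

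For comparison: the paper itself does not prove this statement --- it is quoted as a known theorem with references to \cite{GM}, \cite{GMM} and \cite{HB} --- and the genuine Gupta--Murty/Heath-Brown argument is structurally different from your sketch, abandoning inclusion--exclusion over all $q \mid p-1$ altogether. First, a lower-bound sieve with level-of-distribution input beyond Bombieri--Vinogradov (Fouvry-type estimates) produces $\gg x/\log^2 x$ primes $p \le x$ such that every odd prime factor of $p-1$ exceeds $p^{1/4+\epsilon}$. Second, set $\Gamma = \langle 2,3,5 \rangle$, of rank $r=3$; if for such a prime none of $2,3,5$ is a primitive root and the index $[\F_p^\times : \Gamma_p]$ has an odd prime factor, then $|\Gamma_p| \le p^{3/4-\epsilon}$, and the key counting lemma --- the number of primes $p$ with $|\Gamma_p| \le H$ is $O(H^{1+1/r})$ up to logarithmic factors, proved by noting that the distinct integers $2^a 3^b 5^c \le Y$ must collide modulo any such $p$ and that each nonzero difference has $O(\log Y)$ prime divisors --- shows that only $O(x^{1-\epsilon'})$ primes can arise this way; rank $3$ is precisely the threshold at which $H^{1+1/3}$ with $H = p^{3/4-\epsilon}$ beats $x$. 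Third, the residual case where the index is a power of $2$ is eliminated, for at least one of $2,3,5$, by quadratic-reciprocity casework on congruence classes of $p$. So the multiplicative independence of $2,3,5$ enters through lattice-point and divisor counting, not through any cancellation in power-residue character sums, and no version of your averaged-Chebotarev scheme is known to yield the result.
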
 

In 2004, S. W. Golomb related a generalization of Artin's conjecture to M. Ram Murty \cite{Go}. It states that for every square-free integer $a> 1$ and for every positive integer $r$, there are infinitely many primes $p \equiv 1 ~(\mod r)$ such that the order of $a~ (\mod p)$ is equal to $(p-1)/r$. Moreover, the density of such primes $p$ is equal to a constant  (expressible in terms of $a$ and $r$) times Artin's constant. Assuming GRH, C. Franc and M. Ram Murty \cite{FM} proved the following theorem.
\begin{thm}
Suppose that the generalized Riemann hypothesis is true. Let $a$ be a positive integer that is not an $\ell$-th power for any prime $\ell$ and let $r$ be a positive integer. If $N_{a,r}(x)$ denotes the number of primes $p \leq x$ such that $p \equiv 1 ~(\mod r)$ and the order of $a~ (\mod p)$ is equal to $(p-1)/r$, then 
$$
N_{a,r}(x) ~=~ A(a,r) \frac{x}{\log x} + O\(\frac{x \log\log x}{\log^2 x}\),
$$
where 
$$
A(a,r) ~=~ \sum_{k \geq 1} \frac{\mu(k) m(k)}{r k \varphi(rk)}
$$
and
$$
m(k) ~=~
\begin{cases}
2 & \text{if $rk$ is even and } \sqrt{-a} \in \Q\(\zeta_{2rk}\) \\
1 & \text{otherwise.}
\end{cases}
$$
\end{thm}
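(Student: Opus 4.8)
The plan is to follow Hooley's treatment of Artin's conjecture, adapted so as to detect the \emph{exact} index $r$ rather than index $1$. Write $i_a(p) = [\F_p^\times : \langle a \rangle]$ for the index of the subgroup generated by $a$, so that the order of $a ~(\mod p)$ equals $(p-1)/r$ precisely when $i_a(p) = r$. First I would record the divisibility criterion: for a positive integer $n$ one has $n \mid i_a(p)$ if and only if $n \mid p-1$ and $a^{(p-1)/n} \equiv 1 ~(\mod p)$, which by Dedekind's theorem and Kummer theory is equivalent to $p$ splitting completely in the field $L_n := \Q(\z_n, a^{1/n})$. Setting $P(x,n) = \#\{p \leq x : n \mid i_a(p)\}$, a Möbius inversion then detects the exact index:
$$N_{a,r}(x) ~=~ \sum_{p \leq x} \mathbf{1}[\,i_a(p) = r\,] ~=~ \sum_{k \geq 1} \mu(k)\, P(x, rk).$$
The identity is valid because, writing $i_a(p) = r m$ whenever $r \mid i_a(p)$, the inner sum over $k$ collapses to $\sum_{k \mid m}\mu(k) = \mathbf{1}[m=1]$; the sum is effectively finite since a nonzero term needs $rk \leq i_a(p) \leq p-1 < x$.

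Next I would identify the main term. By the Chebotarev density theorem the density of primes splitting completely in $L_{rk}$ is $1/[L_{rk}:\Q]$, so the heuristic main term is $\bigl(\sum_k \mu(k)/[L_{rk}:\Q]\bigr)\Li(x)$. The degree $[L_{rk}:\Q]$ is computed by Kummer theory: since $a$ is not an $\ell$-th power for any prime $\ell$, one has $[\Q(a^{1/rk}):\Q] = rk$, and the only source of degeneracy in the compositum with $\Q(\z_{rk})$ is a quadratic entanglement, namely the possibility that $\sqrt{-a}$ already lies in the cyclotomic field $\Q(\z_{2rk})$. Carrying out this computation gives $[L_{rk}:\Q] = rk\,\varphi(rk)/m(k)$ with $m(k)$ exactly as in the statement, whence $\sum_k \mu(k)/[L_{rk}:\Q] = \sum_k \mu(k) m(k)/(rk\,\varphi(rk)) = A(a,r)$.

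The analytic core is to convert these densities into a genuine count with the stated error, and this is where GRH enters. The plan is to split the range of $k$. For $k$ up to a small multiple of $\log x$ I would invoke the effective form of the Chebotarev density theorem valid under GRH (Lagarias--Odlyzko), which gives $P(x,rk) = \Li(x)/[L_{rk}:\Q] + O\bigl(\sqrt{x}\,\log(rk\,|\mathrm{disc}\,L_{rk}|\,x)\bigr)$; summed over this range the accumulated error stays below $x/\log^2 x$. For the intermediate range the condition $rk \mid i_a(p)$ forces $p \equiv 1 ~(\mod rk)$, so the Brun--Titchmarsh inequality bounds $P(x,rk) \ll x/(\varphi(rk)\log(x/rk))$ \emph{unconditionally}, and summing $1/\varphi(rk)$ across this range produces the dominant error $O(x\log\log x/\log^2 x)$. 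Finally, for the largest $k$ I would argue that $rk \mid i_a(p)$ forces the order $\ord_p(a) = (p-1)/i_a(p)$ to be small, and the number of primes $p \leq x$ for which $a$ has order at most $y$ is $O(y^2)$; choosing the cutoff appropriately renders this contribution negligible.

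The hard part will be the bookkeeping across the three ranges: one must choose the two cutoffs so that the GRH error, the Brun--Titchmarsh error, and the small-order error all remain within $O(x\log\log x/\log^2 x)$, and one must confirm that the tail $\sum_{k} \mu(k)/[L_{rk}:\Q]$ beyond the first range is small enough (here $\varphi(rk) \gg rk/\log\log(rk)$ gives $\sum_{k>T} 1/(rk\,\varphi(rk)) \ll \log\log T/T$) to be absorbed into that same error after multiplication by $\Li(x)$. The other delicate point is the uniform Kummer degree computation, i.e.\ verifying that the entanglement is at most quadratic and is governed precisely by the membership $\sqrt{-a}\in\Q(\z_{2rk})$, so that the factor $m(k)$ is correct for every $k$; this rests squarely on the hypothesis that $a$ is not a perfect power.
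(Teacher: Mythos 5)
First, a point of orientation: the paper does not actually prove this statement anywhere --- it is quoted as background from Franc and Murty \cite{FM}, and what the paper itself proves (\thmref{thmG}) is only the unconditional \emph{upper-bound} analogue, obtained by an inclusion--exclusion truncated at the tiny level $z\asymp \log\log x$ precisely so that no GRH input is needed. So your proposal has to be judged against the Hooley-style argument of \cite{Ho} and \cite{FM} that it is trying to reconstruct. Your algebraic groundwork is correct: the identity $N_{a,r}(x)=\sum_{k\ge1}\mu(k)P(x,rk)$, the Dedekind/Kummer splitting criterion for $L_{rk}=\Q(\zeta_{rk},a^{1/rk})$, and the degree formula $[L_{rk}:\Q]=rk\varphi(rk)/m(k)$ are all as in \cite{FM}.

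The genuine gap is in the analytic core: you split the M\"obius sum over \emph{all squarefree integers} $k$ into three ranges and bound the middle and top ranges term by term in absolute value, and this cannot deliver the stated error term --- in fact it destroys even the main term. Concretely, your top-range argument must absorb the divisor-function multiplicity (each prime of small order is counted once for \emph{every} squarefree $k>K_2$ with $rk\mid i_a(p)$, i.e.\ up to $x^{o(1)}$ times), which forces the top cutoff up to $K_2\ge x^{1/2+\epsilon}$; meanwhile the GRH--Chebotarev errors, roughly $\sqrt{x}\log x$ per field, cap the first range at $K_1\ll \sqrt{x}/\log^{3}x$. Brun--Titchmarsh on the intermediate range then costs at least $\frac{x}{\log x}\sum_{K_1<k\le K_2}\frac{1}{\varphi(rk)}\gg \frac{x}{r\log x}\log(K_2/K_1)\gg_r \epsilon\, x$, which swamps $A(a,r)x/\log x$. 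The point you have missed is that Hooley's saving $\log\log x/\log x$ comes from Mertens' theorem over \emph{primes} in the narrow window $(\sqrt{x}/\log^{2}x,\ \sqrt{x}\log x]$, where $\sum_q 1/q\approx\log(\log\xi_3/\log\xi_2)\ll \log\log x/\log x$; over all integers in any window wide enough to bridge your ranges, the analogous sum is $\gg\log\log x$ at best, a factor $\log x$ too large. The missing idea --- the actual architecture of \cite{Ho} and \cite{FM}, mirrored in truncated form in the paper's proofs of \thmref{thmArtin} and \thmref{thmG} --- is to abandon the full M\"obius identity: first count primes splitting completely in $L_r$ but in no $L_{rq}$ for \emph{primes} $q\le\xi_1=\frac{1}{6}\log x$, via inclusion--exclusion over the $2^{\pi(\xi_1)}=x^{o(1)}$ squarefree $k$ dividing $\prod_{q\le\xi_1}q$ (each term by GRH--Chebotarev); then bound the discrepancy with $N_{a,r}(x)$ by a union over \emph{single primes} $q>\xi_1$ of the events $rq\mid i_a(p)$, treated in three prime-indexed ranges: GRH--Chebotarev for $\xi_1<q\le\xi_2$, Brun--Titchmarsh for $\xi_2<q\le\xi_3$ (the sole source of $x\log\log x/\log^2 x$), and one single cardinality bound via your small-order argument for $q>\xi_3$, with no summation over $q$ at all. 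With that restructuring your ingredients do assemble into the Franc--Murty proof; as written, the error analysis fails in an essential way.
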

In \cite{FM}, C. Franc and M. Ram Murty also showed that $A(a, r) > 0$ if $r$ is odd and $a=bc^2$ with $b$ even and square-free, thereby proving Golomb's conjecture in this special case subject to GRH. In \cite{Mo}, P. Moree  showed that Golomb's conjecture is false for square-free $a \equiv 1 ~(\mod 4)$ and odd $r$ divisible by $a$. 

Now we will state a few results related to effective Chebotarev density theorem which we require in proofs of main results. Let $L$ be a finite Galois extension of $\Q$ with Galois group $G$. Also let $n_L, D_L$ denote the  degree and the absolute value of the discriminant of $L$ respectively. The Dedekind zeta function $\zeta_L(s)$ has at most one zero in the region defined by $s= \sigma+it$ with
$$
1 - \frac{1}{4 \log d_L} ~\leq~ \sigma ~\leq~ 1,
\phantom{mm}
|t| ~\leq~ \frac{1}{4 \log d_L}
$$
If such a zero  $\beta$ exists, it must be real and simple and is usually called a Siegel zero. We have the following effective version of Chebotarev density theorem due to J. C. Lagarias and A. M. Odlyzko \cite[Theorem 1.3]{LO}.
\begin{thm}\label{thmLO}
Let $L$ be a finite Galois extension of $\Q$ with Galois group $G$ and  $C$ be a conjugacy class in $G$. Also let
$$
\pi_C(x) ~=~ \#\left\{ p \leq x ~:~ p \text{ is unramified in } L,~~ \(\frac{L/\Q}{p} \) = C\right\}.
$$
There exist absolute effectively computable constants $c_1$ and $c_2$ such that if $x \geq \exp\( 10n_L (\log D_L)^2 \)$, then 
$$
\left| \pi_C(x) - \frac{|C|}{|G|} \Li(x) \right|
~\leq~
\frac{|C|}{|G|} \Li(x^{\beta}) + c_1 x \exp\(-c_2 \sqrt{\frac{\log x}{n_L}}\),
$$
where the term involving $\beta$ is present only when $\beta$ exists.
\end{thm}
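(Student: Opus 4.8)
The plan is to pass from the prime-counting function $\pi_C(x)$ to a Chebyshev-type weighted sum and to express that sum through the $L$-functions attached to $L/\Q$. First I would introduce the weighted counting function
$$
\psi_C(x) ~=~ \sum_{\substack{p^m \leq x \\ \left[\frac{L/\Q}{p}\right]^m = C}} \log p,
$$
the sum running over primes $p$ unramified in $L$ and integers $m \geq 1$, where the Artin symbol of a prime power is read as the $m$-th power of the Frobenius class. Partial summation, together with the trivial estimate for the ramified primes and the bound $O(n_L\sqrt{x}\log(D_L x))$ for the contribution of the prime powers $p^m$ with $m \geq 2$, lets me recover $\pi_C(x)$ from $\psi_C(x)$ with an admissible error, so the whole problem reduces to an asymptotic for $\psi_C(x)$.

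To analyse $\psi_C(x)$, I would use column orthogonality of the character table of $G$: fixing $g \in C$, the indicator of $C$ decomposes as $\mathbf{1}_C = \frac{|C|}{|G|}\sum_{\chi}\overline{\chi(g)}\,\chi$, the sum over the irreducible characters of $G$. Substituting this into $\psi_C$ gives
$$
\psi_C(x) ~=~ \frac{|C|}{|G|} \sum_{\chi} \overline{\chi(g)}\, \psi(x,\chi),
\qquad
\psi(x,\chi) ~=~ \sum_{p^m \leq x} \chi\!\left(\left[\tfrac{L/\Q}{p}\right]^m\right) \log p,
$$
so $\psi_C(x)$ is governed by the Artin $L$-functions $L(s,\chi,L/\Q)$. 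The trivial character contributes $L(s,1) = \zeta_\Q(s)$ up to ramified Euler factors, whose simple pole at $s=1$ yields the main term $\frac{|C|}{|G|}x$, i.e. $\frac{|C|}{|G|}\Li(x)$ after returning to $\pi_C$.

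The analytic input is an explicit formula for each $\psi(x,\chi)$. Since Artin $L$-functions are not known to be entire, I would first pass to the abelian situation: replacing $G$ by the cyclic subgroup $\langle g\rangle$ and $\Q$ by the fixed field $K = L^{\langle g\rangle}$ (Deuring's device), or alternatively invoking Brauer's induction theorem to write each $L(s,\chi)$ as an integer combination of Hecke $L$-functions over intermediate fields. Either route leaves only abelian $L$-functions, for which the functional equation, meromorphic continuation, and Hadamard factorization hold with conductors explicitly bounded in terms of $D_L$ and $n_L$. The explicit formula then takes the shape
$$
\psi(x,\chi) ~=~ \delta_\chi\, x ~-~ \sum_{\rho} \frac{x^{\rho}}{\rho} ~+~ (\text{bounded and archimedean terms}),
$$
with $\delta_\chi = 1$ for the trivial character and $0$ otherwise, and $\rho$ ranging over the non-trivial zeros of the associated $L$-functions in the critical strip.

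The remaining and genuinely hard step is to bound $\sum_\rho x^\rho/\rho$ uniformly in the field data. Here I would invoke the zero-free region for Dedekind zeta functions and Hecke $L$-functions with explicit dependence on $D_L$, namely a de la Vallée Poussin region $\sigma > 1 - c/\log(D_L(|t|+2))$ free of zeros apart from at most one real simple exceptional zero $\beta$, the Siegel zero. Splitting off this possible zero produces the term $\frac{|C|}{|G|}\Li(x^\beta)$; bounding the remaining zeros by combining the zero-free region with an explicit density estimate for zeros near $\sigma = 1$ and choosing the truncation height optimally gives the main error $c_1 x \exp\!\left(-c_2\sqrt{\tfrac{\log x}{n_L}}\right)$. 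The hypothesis $x \geq \exp(10\,n_L(\log D_L)^2)$ is precisely what forces this de la Vallée Poussin saving to dominate all secondary contributions, consolidating the error into the stated bound. The principal obstacle throughout is making every constant and every zero-count depend on $L$ only through $n_L$ and $D_L$ in the explicit manner required, which is exactly where the careful analysis of the Hadamard products and of the exceptional zero, via the Deuring--Heilbronn phenomenon controlling how a Siegel zero repels the others, becomes essential.
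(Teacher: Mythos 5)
This statement is not proved in the paper at all: it is Theorem 1.3 of Lagarias and Odlyzko \cite{LO}, quoted verbatim as an external input, so there is no internal proof to compare your attempt against. Your sketch --- passing to $\psi_C$, character orthogonality to reduce to $\psi(x,\chi)$, Deuring's device (or Brauer induction) to land on Hecke $L$-functions, the explicit formula, and the zero-free region with at most one exceptional zero split off into the $\Li(x^{\beta})$ term --- is precisely the strategy of the original Lagarias--Odlyzko proof and is sound in outline. One small correction: for this particular statement, in the range $x \geq \exp(10 n_L (\log D_L)^2)$, the Deuring--Heilbronn repulsion phenomenon you invoke at the end is not actually needed; the Siegel zero is simply isolated and carried along in the $\Li(x^{\beta})$ term, and repulsion estimates enter only in companion results (e.g.\ bounds for the least prime in a Chebotarev class), not in the proof of this bound.
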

The best known bound for $\beta$ is due to H. M. Stark  \cite[p. 148]{St}.
\begin{thm}\label{thmStb}
There exists an absolute constant $c_3$ such that 
$$
\beta ~<~ \max\(1 - \frac{1}{4 \log D_L},~ 1- \frac{c_3}{ D_L^{1/n_L}} \).
$$
\end{thm}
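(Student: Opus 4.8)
The statement is Stark's bound on the exceptional real zero (Siegel zero) $\beta$ of the Dedekind zeta function $\zeta_L$, and the plan is to reconstruct his argument by splitting into two cases according to whether $\beta$ is genuinely exceptional; the maximum on the right then records the two outcomes. First I would invoke the general zero-free region for $\zeta_L$: starting from the Hadamard factorization of the completed function $\Lambda_L(s) = D_L^{s/2}\gamma_L(s)\zeta_L(s)$, with $\gamma_L(s) = (\pi^{-s/2}\Gamma(s/2))^{r_1}((2\pi)^{-s}\Gamma(s))^{r_2}$ and $r_1 + 2r_2 = n_L$, and combining the positivity of the Dirichlet coefficients of $-\zeta_L'/\zeta_L$ with the inequality $3 + 4\cos\theta + \cos 2\theta \ge 0$ applied to the nonreal zeros, one obtains that $\zeta_L(s)\neq 0$ throughout the region $\sigma \ge 1 - \tfrac{1}{4\log D_L}$, $|t|\le \tfrac{1}{4\log D_L}$, with the sole possible exception of one simple real zero $\beta$. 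If this exceptional zero is absent, then every real zero already satisfies $\beta < 1 - \tfrac{1}{4\log D_L}$, which is the first term in the maximum, and we are finished.

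The substantive case is when the exceptional zero does lie in the critical region, since a single Dedekind zeta function can genuinely possess such a zero: it cannot be eliminated, only repelled from $s=1$. Here I would run a mean value argument anchored on the analytic class number formula. Set $g(s) = (s-1)\zeta_L(s)$, holomorphic near $s=1$ with $g(\beta) = 0$ and $g(1) = \kappa_L$, the residue of $\zeta_L$ at $s=1$, so that for some $\xi\in(\beta,1)$
$$
1 - \beta ~=~ \frac{\kappa_L}{g'(\xi)} ~\geq~ \frac{\kappa_L}{\max_{\sigma\in[\beta,1]}|g'(\sigma)|}.
$$
This reduces the claim to two effective estimates. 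For the denominator, since $\beta$ is close to $1$ the interval $[\beta,1]$ is short, and a Cauchy estimate on a circle of radius $r\asymp D_L^{-1/n_L}$ about $s=1$, using the functional equation and the convexity bound for $\zeta_L$ in the critical strip, controls $g'$; it is exactly the factor $D_L^{s/2}$ together with the gamma factors in $\Lambda_L$ whose variation over such a circle is governed by the root discriminant $D_L^{1/n_L}$. For the numerator, the class number formula $\kappa_L = 2^{r_1}(2\pi)^{r_2}h_L R_L/(w_L D_L^{1/2})$ supplies the residue explicitly. Balancing the two estimates is what produces $1 - \beta \gg D_L^{-1/n_L}$, the second term in the maximum, and taking the two cases together yields the stated bound with an absolute constant $c_3$.

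The main obstacle is the effective and uniform lower bound for $\kappa_L$, equivalently for the product $h_L R_L$. Ineffectively, Brauer--Siegel gives $h_L R_L = D_L^{1/2 + o(1)}$, whence $\kappa_L = D_L^{o(1)}$, but this is precisely the estimate that is not available unconditionally; the whole difficulty is to arrange the argument so that the crude input $h_L \ge 1$ together with an effective lower bound for the regulator $R_L$ suffices, and so that the archimedean factors $2^{r_1}(2\pi)^{r_2}$ and the discriminant factor $D_L^{1/2}$ combine, after passing to the $n_L$-th root scale natural for $\Lambda_L$, into the single quantity $D_L^{1/n_L}$ with a constant independent of $L$. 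This is the analytic heart of Stark's effective cases of the Brauer--Siegel theorem, and it is where the digamma contributions $\gamma_L'/\gamma_L$ near $\sigma = 1$ must be treated precisely rather than through the crude $O(n_L)$ bounds that would lose the correct power of the root discriminant. Throughout, one must keep in mind that the aim is not to prove the nonexistence of the Siegel zero but only to quantify how far it is pushed away from the pole of $\zeta_L$ at $s=1$, so that combining both cases gives $\beta < \max\bigl(1 - \tfrac{1}{4\log D_L},\, 1 - c_3 D_L^{-1/n_L}\bigr)$.
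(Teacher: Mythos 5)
This theorem is not proved in the paper at all: it is quoted verbatim from Stark \cite[p.~148]{St}, so the comparison must be against Stark's actual argument. Your first case (the zero-free region with at most one simple real exception, via the Hadamard factorization, positivity of the coefficients of $-\zeta_L'/\zeta_L$, and the $3+4\cos\theta+\cos 2\theta$ trick) is fine and standard. The genuine gap is in your second case, and it is not merely an omitted computation: the route you propose through the residue $\kappa_L$ of $\zeta_L$ itself cannot yield the exponent $1/n_L$. Your mean-value identity $1-\beta=\kappa_L/g'(\xi)$ is correct, but the class number formula puts $D_L^{1/2}$ in the denominator of $\kappa_L$, so to conclude $1-\beta\gg D_L^{-1/n_L}$ you would need an effective lower bound $h_LR_L\gg D_L^{1/2-1/n_L}$ (up to factors at most exponential in $n_L$). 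That is precisely an effective Brauer--Siegel statement, which is open; the unconditional inputs actually available ($h_L\ge 1$, absolute or logarithmic lower bounds for $R_L/w_L$) give only $\kappa_L\gg D_L^{-1/2}$ up to such factors, hence at best $1-\beta\gg D_L^{-1/2}$ --- much weaker than $D_L^{-1/n_L}$ for $n_L\ge 3$. Your hope that the archimedean factors $2^{r_1}(2\pi)^{r_2}$ rescale $D_L^{-1/2}$ to the root-discriminant scale is arithmetically impossible: those factors are at most $(2\pi)^{n_L}$, while the needed correction $D_L^{1/2-1/n_L}$ is a genuine power of $D_L$. (Your Cauchy estimate also degrades: with $r\asymp D_L^{-1/n_L}$, convexity gives $|g|\ll D_L^{r/2+\epsilon}=\exp\bigl(\tfrac12 D_L^{-1/n_L}\log D_L\bigr)$, which is unbounded when the root discriminant stays bounded and $n_L\to\infty$, so uniformity in the degree is lost as well.)

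Stark's actual mechanism, absent from your sketch, is a descent to a quadratic subfield. If $\beta$ lies in the exceptional region, one passes to the Galois closure $M$ of $L$ (Aramata--Brauer makes $\zeta_M/\zeta_L$ entire, so $\zeta_M(\beta)=0$) and shows, using the simplicity and uniqueness of the exceptional zero together with the factorization of $\zeta_M$ into Artin $L$-functions, that $\beta$ must be a zero of $L(s,\chi)$ for a \emph{real abelian} character $\chi$; since $\chi$ must then occur in the permutation character of $G/H$, the corresponding quadratic field $F$ satisfies $F\subseteq L$ and $\zeta_F(\beta)=0$. For quadratic $F$ the effective case is genuinely available: Dirichlet's class number formula with $h_F\ge 1$ (and $\log\epsilon_F\gg\log D_F$ in the real case) gives $1-\beta\gg D_F^{-1/2}$ unconditionally. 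Finally, the discriminant tower formula $D_L=D_F^{[L:F]}N_{F/\Q}(\mathfrak{d}_{L/F})\ge D_F^{n_L/2}$ converts this into $1-\beta\gg D_F^{-1/2}\ge D_L^{-1/n_L}$ with an absolute constant. It is exactly this trio --- descent to $F\subseteq L$ of degree $2$, the effective quadratic bound, and the inequality $D_F^{1/2}\le D_L^{1/n_L}$ --- that produces the root discriminant in the statement; your proposal correctly isolates where the difficulty sits but replaces the idea that resolves it with the very estimate that is unavailable.
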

We have the following bound for $D_L$ in terms of $n_L$ \cite[Prop. 6, p. 130]{Se} due to J.-P. Serre.
\begin{prop}\label{propDL}
We have
$$
\frac{n_L}{2} \sum_{p \mid D_L} \log p ~\leq~
\log D_L ~\leq~
(n_L -1) \sum_{p \mid D_L} \log p ~+~ n_L \log n_L.
$$
\end{prop}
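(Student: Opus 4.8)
The plan is to reduce the two-sided bound to a prime-by-prime analysis of the different, exploiting the fact that $L/\Q$ is Galois. Writing $\mathfrak{d}_{L/\Q}$ for the different of $L/\Q$ and using that the discriminant is its norm, $D_L = N_{L/\Q}(\mathfrak{d}_{L/\Q})$, I get
$$
\log D_L ~=~ \sum_{p \mid D_L} v_p(D_L)\, \log p, \qquad v_p(D_L) ~=~ \sum_{\fP \mid p} f(\fP/p)\, d(\fP/p),
$$
where $d(\fP/p)$ is the exponent of $\fP$ in $\mathfrak{d}_{L/\Q}$, $f(\fP/p)$ the residue degree, and $v_p(\cdot)$ the $p$-adic valuation. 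Since $L/\Q$ is Galois, the primes above a fixed $p$ all share a common ramification index $e=e(p)$ and residue degree $f=f(p)$; if $g=g(p)$ is their number then $efg=n_L$, so in particular $e \mid n_L$, and $p \mid D_L$ precisely when $e \geq 2$. Thus $v_p(D_L) = gf\, d$ with $d=d(\fP/p)$ the common different exponent.

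The main input is the classical local estimate for the different exponent (Serre, \emph{Local Fields}, Ch.~III): for each ramified $p$,
$$
e - 1 ~\leq~ d ~\leq~ e - 1 + v_{\fP}(e) ~=~ e - 1 + e\, v_p(e),
$$
where the left inequality is an equality exactly in the tame case $p \nmid e$. For the lower bound I would feed $d \geq e-1$ into $v_p(D_L)=gf\,d$, so that for every ramified $p$,
$$
v_p(D_L) ~=~ gf\, d ~\geq~ gf(e-1) ~=~ n_L - gf ~=~ n_L\Bigl(1 - \frac{1}{e}\Bigr) ~\geq~ \frac{n_L}{2},
$$
using $gf = n_L/e$ and $e \geq 2$. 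Summing $v_p(D_L)\log p$ over $p \mid D_L$ immediately gives $\log D_L \geq \tfrac{n_L}{2}\sum_{p\mid D_L}\log p$, which is the left inequality.

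For the upper bound I would instead feed in $d \leq e - 1 + e\,v_p(e)$, obtaining
$$
v_p(D_L) ~\leq~ gf(e-1) + gfe\, v_p(e) ~=~ \Bigl(n_L - \frac{n_L}{e}\Bigr) + n_L\, v_p(e) ~\leq~ (n_L - 1) + n_L\, v_p(e).
$$
Multiplying by $\log p$ and summing over $p \mid D_L$ yields the main term $(n_L-1)\sum_{p\mid D_L}\log p$ plus the correction $n_L\sum_{p\mid D_L} v_p(e_p)\log p = n_L\log\bigl(\prod_{p} p^{v_p(e_p)}\bigr)$. The decisive point, and the one genuinely delicate step, is bounding this wild part: $v_p(e_p)>0$ means $p \mid e_p$ (wild ramification), which in the Galois setting forces $p \mid n_L$ since $e_p \mid n_L$; moreover $p^{v_p(e_p)} \mid p^{v_p(n_L)}$ for each such $p$, so across distinct primes $\prod_{p} p^{v_p(e_p)} \mid n_L$ and hence $\prod_{p} p^{v_p(e_p)} \leq n_L$. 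Therefore the correction is at most $n_L\log n_L$, giving $\log D_L \leq (n_L-1)\sum_{p\mid D_L}\log p + n_L\log n_L$. The subtlety worth flagging is exactly this telescoping: each individual term $n_L v_p(e_p)\log p$ can be of size comparable to $n_L\log n_L$ on its own, and it is only because the wildly ramified primes all divide $n_L$ and their wild exponents assemble into a single divisor of $n_L$ that the whole sum collapses to one $n_L\log n_L$; the constancy of $e,f,g$ and the identity $efg=n_L$ in the Galois case are what make this possible. Everything else is the bookkeeping of the norm-of-the-different identity, which I would cite.
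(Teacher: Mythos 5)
Your proof is correct, and it is essentially the argument behind the citation the paper relies on: the paper gives no proof of \propref{propDL}, quoting it from Serre \cite[Prop.~6, p.~130]{Se}, whose proof is exactly this reduction via $D_L = N_{L/\Q}(\fd_{L/\Q})$ and the local bounds $e-1 \leq d \leq e-1+e\,v_p(e)$ on the different exponent. You also correctly identify the two genuinely load-bearing points: the Galois hypothesis (constancy of $e,f,g$ with $efg=n_L$), without which the lower bound $\frac{n_L}{2}\sum_{p \mid D_L}\log p$ can fail (e.g.\ a non-Galois cubic of discriminant $-23$), and the fact that the wild contributions $p^{v_p(e_p)}$ over distinct primes assemble into a single divisor of $n_L$, so the correction term collapses to $n_L\log n_L$.
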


\medspace

\subsection{Interplay between Matsuda monoids and primitive roots}\label{subSecMArt}
Let $\ell$ be a prime number and $\F_{\ell}$ be the finite field of $\ell$ elements. Let $n \in \N $ and for a polynomial $f(X)= a_n X^n + a_{n-1} X^{n-1} + \cdots + a_1 X + a_0  \in \F_{\ell}[X]$ with $a_n \neq 0$, the trace of $f$ is given by
$$
\text{tr}(f) ~=~ -\frac{a_{n-1}}{a_n}.
$$
We say that $f$ is a traceless polynomial if $\text{tr}(f) = 0$.
For $f \in \F_{\ell}[X] \backslash\{0\}$, let
$$
\cR(f)(X) ~=~ X^{\text{deg}(f)}f\(\frac{1}{X}\).
$$
Let $M$ be the  additive submonoid of $\N_{0}$ generated by $2$ and $3$. Then we have $M = \N_{0} \backslash\{1\}$. Further, we have
$$
\F_{\ell}[X;M] ~=~ \left\{ a_0 + a_2 X^2 + \cdots + a_n X^n ~:~
a_i \in \F_{\ell},~~ 0 \leq i \leq n,~~ n \in \N_0 \backslash\{1\} \right\} \subset \F_{\ell}[X].
$$
Let $f(X), g(X) \in \F_{\ell}[X;M]$ be non-constant polynomials which are non-vanishing at $0$ i.e., $f(0) \neq 0$, $g(0) \neq 0$. Then we have
\begin{itemize}
	\item[i)]$ \text{tr}(\cR(f))~=~ 0$,
	\item[ii)] $\cR (fg) ~=~ \cR(f) \cR(g)$,
	\item[iii)] $\cR\(\cR(f)\) ~=~ f$.
\end{itemize}
Thus a non-constant polynomial $f(X)\in \F_{\ell}[X;M]$ with $f(0) \neq 0$ is reducible in $\F_{\ell}[X;M]$ if and only if $\cR(f)$ factors in $\F_{\ell}[X]$ as a product of traceless polynomials. In particular, we have the following lemma.
\begin{lem}\label{lemredFlM}
Let $n \geq 2$ be a natural number, then $X^n -1$ is reducible in $ \F_{\ell}[X;M]$ if and only if it factors as a product of traceless polynomials in $\F_{\ell}[X]$.
\end{lem}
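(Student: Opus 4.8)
The plan is to read the lemma as the self-reciprocal case $f = X^n-1$ of the displayed criterion just above it, and to verify it directly from the reciprocity dictionary in properties (i)--(iii). First I would record the two facts that make $\cR$ applicable: since $n \geq 2$, the coefficient of $X^1$ in $X^n-1$ vanishes, so $X^n-1 \in \F_\ell[X;M]$; and its constant term is $-1 \neq 0$, so $\cR$ applies to it and to every factor appearing below. A one-line computation then gives $\cR(X^n-1) = -X^n+1 = -(X^n-1)$, i.e. $X^n-1$ is anti-palindromic. Multiplying by the unit $-1 \in \F_\ell^\times$ preserves both degree and tracelessness (because $\tr(cf) = \tr(f)$ for $c \neq 0$), and this is exactly what lets the general criterion close up on $X^n-1$ itself rather than on a reciprocal of it.

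For the forward implication, suppose $X^n-1 = gh$ with $g,h \in \F_\ell[X;M]$ non-constant. Comparing constant terms gives $g(0)h(0) = -1$, so $g(0),h(0) \neq 0$; hence property (i) applies and both $\cR(g)$ and $\cR(h)$ are traceless. Using multiplicativity (ii) and the computation above, $-(X^n-1) = \cR(g)\cR(h)$, so $X^n-1 = \bigl(-\cR(g)\bigr)\cR(h)$ exhibits $X^n-1$ as a product of two non-constant traceless polynomials in $\F_\ell[X]$.

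For the converse, suppose $X^n-1 = F_1\cdots F_k$ with $k \geq 2$ and each $F_i$ a non-constant traceless polynomial. The constant terms multiply to $-1$, so each $F_i(0) \neq 0$; moreover a traceless polynomial with nonzero constant term cannot be linear, so $\deg F_i \geq 2$. Applying $\cR$ and using (ii) and (iii) yields $X^n-1 = -\cR(F_1)\cdots\cR(F_k)$. The crux is that each $\cR(F_i)$ lies in $\F_\ell[X;M]$: the coefficient of $X^1$ in $\cR(F_i)$ equals the coefficient of $X^{\deg F_i - 1}$ in $F_i$, which vanishes precisely because $F_i$ is traceless. Thus $X^n-1 = \bigl(-\cR(F_1)\bigr)\cR(F_2)\cdots\cR(F_k)$ is a factorization into non-units of $\F_\ell[X;M]$, so $X^n-1$ is reducible there.

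The step I expect to require the most care is this last point, namely checking that reversing a traceless polynomial lands back in $\F_\ell[X;M]$ with the correct degree: it hinges on the bookkeeping that $\cR$ carries the next-to-leading coefficient of $F_i$ to the $X^1$-coefficient of $\cR(F_i)$, and that the nonvanishing of the constant terms (forced by $(X^n-1)(0)=-1$) keeps degrees from collapsing so that $\cR(\cR(F_i)) = F_i$. Everything else is a routine translation through properties (i)--(iii).
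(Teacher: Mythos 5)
Your proposal is correct and follows the paper's own route: the paper deduces the lemma as the ``in particular'' case of the displayed criterion (reducibility in $\F_\ell[X;M]$ iff $\cR(f)$ factors into traceless polynomials), combined with the anti-palindromic identity $\cR(X^n-1) = -(X^n-1)$, which is exactly your reading. You have merely unpacked the criterion into explicit verifications via properties (i)--(iii), correctly handling the unit $-1$, the nonvanishing constant terms, and the key point that tracelessness of $F_i$ is what puts $\cR(F_i)$ back into $\F_\ell[X;M]$.
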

Let $p$ be a prime number and $\Phi_p(X)$ denote the $p$-th cyclotomic polynomial. Then we have
$$
\Phi_p(X) ~=~ \frac{X^p - 1}{X-1} ~=~ 1+ X+ \cdots + X^{p-1}
$$
and $\text{tr}(\Phi_p) = -1$. As a consequence of \lemref{lemredFlM}, we have the following lemma.
\begin{lem}\label{lemredXnPhip0}
	Let $ p$ and $\ell$ be distinct prime numbers. Then $X^p - 1$ is reducible in $ \F_{\ell}[X;M]$ if and only if $\Phi_p(X)$ has a traceless factor in $\F_{\ell}[X]$.
\end{lem}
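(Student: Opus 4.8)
The plan is to combine \lemref{lemredFlM} with two elementary observations: that the trace is additive under multiplication, and that $X^p-1$ is separable over $\F_\ell$ because $p \neq \ell$. For nonzero $f, g \in \F_\ell[X]$ with leading coefficients $b, c$ and subleading coefficients $b', c'$, the product $fg$ has leading coefficient $bc$ and subleading coefficient $bc' + b'c$, whence
$$
\tr(fg) ~=~ -\frac{bc' + b'c}{bc} ~=~ \tr(f) + \tr(g).
$$
Thus $\tr$ is a homomorphism from the multiplicative monoid of nonzero polynomials to $(\F_\ell, +)$, and it is clearly invariant under scaling by nonzero constants. In particular $\tr(X-1) = 1$, $\tr(\Phi_p) = -1$, and $\tr(X^p - 1) = 0$, consistent with $X^p - 1 = (X-1)\Phi_p$. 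Since $p \neq \ell$, the polynomial $X^p - 1$ is squarefree in $\F_\ell[X]$, so its factorization into monic irreducibles is unique and any factorization into monic polynomials corresponds to a partition of the set $\{X-1, \phi_1, \dots, \phi_m\}$ of its distinct monic irreducible factors, where $\phi_1, \dots, \phi_m$ are the irreducible factors of $\Phi_p$.

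For the forward direction, suppose $X^p - 1$ is reducible in $\F_\ell[X;M]$. By \lemref{lemredFlM} it admits a nontrivial factorization $X^p - 1 = g_1 \cdots g_k$ with $k \geq 2$ into nonconstant traceless polynomials in $\F_\ell[X]$, which we may take monic. This corresponds to a partition of $\{X - 1, \phi_1, \dots, \phi_m\}$ into $k \geq 2$ nonempty blocks, each of whose product has trace $0$ by additivity. The factor $X - 1$ lies in exactly one block; since $k \geq 2$, choose a block $B$ not containing $X - 1$. Then $B$ is a nonempty subset of $\{\phi_1, \dots, \phi_m\}$, so $\prod_{\phi \in B} \phi$ is a nonconstant factor of $\Phi_p$ with trace $0$, i.e. a traceless factor of $\Phi_p$.

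For the reverse direction, suppose $\Phi_p$ has a traceless factor $g$; since $\tr(\Phi_p) = -1 \neq 0$, necessarily $g$ is a proper factor, so $h := \Phi_p / g$ is nonconstant. Writing $X^p - 1 = g \cdot \big((X-1) h\big)$ and using additivity together with $\tr(g) + \tr(h) = \tr(\Phi_p) = -1$, we obtain $\tr(h) = -1$ and hence $\tr\big((X-1)h\big) = \tr(X-1) + \tr(h) = 1 + (-1) = 0$. Thus $X^p - 1$ factors as a product of two nonconstant traceless polynomials, and \lemref{lemredFlM} yields that $X^p - 1$ is reducible in $\F_\ell[X;M]$.

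The only delicate point is the bookkeeping around the factor $X - 1$: its trace is nonzero, so it can never form a traceless block by itself, and in the forward direction one must ensure that a traceless block disjoint from $X-1$ actually exists — this is guaranteed precisely by the nontriviality ($k \geq 2$) of the factorization coming from reducibility. Separability ($p \neq \ell$) is what lets me pass freely between factorizations of $X^p-1$ and partitions of its irreducible factors, so that no factor is repeated and the trace bookkeeping via the homomorphism $\tr$ is unambiguous.
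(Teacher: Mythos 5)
Your proof is correct and matches the paper's intended argument: the paper states this lemma as a direct consequence of \lemref{lemredFlM} together with the factorization $X^p-1=(X-1)\Phi_p(X)$ and $\tr(\Phi_p)=-1$, which is exactly the route you take. Your write-up simply makes explicit the supporting details (additivity of $\tr$ under multiplication, separability of $X^p-1$ when $p\neq\ell$, and the bookkeeping around the factor $X-1$) that the paper leaves implicit.
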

We have the following  lemma on the factorization of $\Phi_p(X)$ in $\F_{\ell}[X]$ (see \cite[Theorem 2.47, p. 65]{LN}).
\begin{lem}\label{lemfactPhi}
Let $\ell$ and $p$ be distinct prime numbers and let  $r$ be the order of $\ell ~(\mod p)$. Then $\Phi_p(X)$ factors into $(p-1)/r$ distinct monic irreducible polynomials in $\F_{\ell}[X]$ of the same degree $r$.  
\end{lem}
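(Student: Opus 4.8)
The plan is to locate the roots of $\Phi_p(X)$ inside a suitable extension of $\F_\ell$ and to read off the degrees of the irreducible factors from the field generated by a single root. First I would note that since $\ell \neq p$, the class of $p$ in $\F_\ell$ is nonzero, so the derivative of $X^p - 1$ is $p X^{p-1}$, a nonzero scalar multiple of $X^{p-1}$, whose only root is $0$; as $0$ is not a root of $X^p - 1$, the polynomial $X^p-1$ is separable over $\F_\ell$. Hence $\Phi_p$ is separable, and its irreducible factors are automatically distinct. This settles the ``distinct'' assertion at once, and the factors may be taken monic because $\Phi_p$ is monic.

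The central step is the computation of the degree of an irreducible factor. The roots of $\Phi_p$ in $\overline{\F_\ell}$ are exactly the primitive $p$-th roots of unity, that is, the elements of multiplicative order $p$. I would observe that such an element $\zeta$ lies in $\F_{\ell^m}$ if and only if $p \mid \ell^m - 1$, equivalently $\ell^m \equiv 1 \pmod p$: indeed $\F_{\ell^m}^\times$ is cyclic of order $\ell^m - 1$, so it contains an element of order $p$ precisely when $p$ divides that order, and in that case it contains all primitive $p$-th roots of unity. By the definition of $r$ as the order of $\ell \pmod p$, the set of exponents $m$ with $\ell^m \equiv 1 \pmod p$ is exactly the set of multiples of $r$, with least element $r$. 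It follows that $\zeta \in \F_{\ell^r}$ while $\zeta \notin \F_{\ell^m}$ for any proper divisor, so that $\F_\ell(\zeta) = \F_{\ell^r}$ and $[\F_\ell(\zeta):\F_\ell] = r$. Consequently the minimal polynomial of $\zeta$ over $\F_\ell$, which is precisely the irreducible factor of $\Phi_p$ having $\zeta$ as a root, has degree $r$; and since every root of $\Phi_p$ is a primitive $p$-th root of unity, the same argument applies verbatim to every irreducible factor.

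Finally I would count. All irreducible factors share the common degree $r$, and $\deg \Phi_p = p-1$; by Lagrange's theorem $r \mid p-1$, so comparing degrees forces the number of factors to be exactly $(p-1)/r$. I expect the only delicate point to be the clean identification $\F_\ell(\zeta) = \F_{\ell^r}$, namely ruling out that $\zeta$ already lies in a proper subfield; but this is immediate from the cyclicity of $\F_{\ell^m}^\times$ together with the minimality defining $r$. Everything else reduces to a routine degree count.
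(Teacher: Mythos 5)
Your proof is correct and complete: separability from $\gcd(p,\ell)=1$, the identification $\F_\ell(\zeta)=\F_{\ell^r}$ via the order of $\ell \pmod p$, and the degree count are all sound. The paper does not prove this lemma itself but cites it from Lidl--Niederreiter (Theorem 2.47), and your argument is precisely the standard proof given there, so there is nothing to add.
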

Let
$$
E(\ell) ~=~ \left\{ p  ~:~ X^p -1 \text{ is reducible in } \F_{\ell}[X;M] \right\}.
$$ 
From \lemref{lemredXnPhip0}, we have $p \in E(\ell)$ if and only if $\Phi_p(X)$ has a traceless factor in $\F_{\ell}[X]$.
From \lemref{lemfactPhi}, $\Phi_p(X)$ is irreducible in $\F_{\ell}[X]$ if and only if $\ell$ is a primitive root $(\mod p)$. Further, if $\Phi_p(X)$ is reducible in $\F_{2}[X]$, then one can easily show that $\Phi_p(X)$ has a traceless factor in  $\F_{2}[X]$ and the same result is true when $\ell = 3$. Also note that $\ell \not\in E(\ell)$. Hence we have the following lemma (see \cite[Lemma 3]{Da}).
\begin{lem}\label{lemE23}
	Let $\ell \in \{2, 3\}$, then we have
	$$
	E(\ell) ~=~ \left\{ p ~:~ \ell \text{  is not a primitive root } (\mod p) \right\}.
	$$
\end{lem}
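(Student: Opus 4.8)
The plan is to combine the factorization results \lemref{lemredXnPhip0} and \lemref{lemfactPhi} with a short arithmetic argument that is special to $\ell\in\{2,3\}$. First I would record the two inclusions whose easy direction in fact holds for every prime $\ell$. Since $\text{tr}(\Phi_p)=-1\neq 0$ in $\F_\ell$, the polynomial $\Phi_p$ is itself never traceless; hence any traceless divisor of $\Phi_p$ is a proper one, and by \lemref{lemredXnPhip0} together with \lemref{lemfactPhi} we get that $p\in E(\ell)$ forces $\Phi_p$ to be reducible, i.e.\ $\ell$ is not a primitive root $\pmod p$. (The prime $p=\ell$ is set aside throughout, since $\ell\notin E(\ell)$.) This gives the inclusion $E(\ell)\subseteq\{p:\ell\text{ is not a primitive root }\pmod p\}$ with no restriction on $\ell$.

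For the reverse inclusion I must establish the assertion the paper records as ``easily shown'': if $\ell\in\{2,3\}$ and $\Phi_p$ is reducible in $\F_\ell[X]$, then $\Phi_p$ has a traceless factor. By \lemref{lemfactPhi}, write $\Phi_p=\prod_{i=1}^{k}g_i$ with $k=(p-1)/r\geq 2$ distinct monic irreducible factors of degree $r$, and let $c_i$ be the coefficient of $X^{r-1}$ in $g_i$. Because the $g_i$ are monic, the subleading coefficient of a subproduct $\prod_{i\in S}g_i$ is $\sum_{i\in S}c_i$, so such a subproduct is traceless exactly when $\sum_{i\in S}c_i=0$ in $\F_\ell$; moreover $\sum_{i=1}^{k}c_i$ is the subleading coefficient of $\Phi_p$, which equals $1\neq 0$. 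Thus everything reduces to producing a nonempty $S$ with $\sum_{i\in S}c_i=0$. Looking at the partial sums $s_0=0,s_1,\dots,s_k$ in $\F_\ell$, if two coincide, say $s_i=s_j$ with $i<j$, then $S=\{i+1,\dots,j\}$ works; since there are $k+1$ partial sums in a set of size $\ell$, this pigeonhole succeeds whenever $k\geq\ell$. For $\ell=2$ reducibility already gives $k\geq 2=\ell$, so that case is complete.

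The one remaining case, and the genuine obstacle, is $\ell=3$ with $k=2$. Here $S\in\{\{1\},\{2\},\{1,2\}\}$, and since $c_1+c_2=1\neq 0$ the only way to succeed is $c_1=0$ or $c_2=0$; the argument breaks precisely when $(c_1,c_2)=(2,2)$, so I must rule this out. The point is that $k=2$ forces $\langle 3\rangle$ to be the index-$2$ subgroup of $\F_p^\times$, i.e.\ the group of quadratic residues, so the two Frobenius orbits of primitive $p$-th roots of unity are indexed by the residues and the non-residues, and one has $c_1=-\eta_0$, $c_2=-\eta_1$, where $\eta_0,\eta_1$ are the two quadratic Gaussian periods (elements of $\F_3$, being fixed by $x\mapsto x^3$). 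I would then invoke the classical identity $\eta_0\eta_1=\tfrac{1-p^{*}}{4}$ with $p^{*}=(-1)^{(p-1)/2}p$. Since $k=2$ also forces $3$ to be a quadratic residue $\pmod p$, quadratic reciprocity gives $p\equiv\pm 1\pmod{12}$, whence $p^{*}\equiv 1\pmod{12}$ and $\eta_0\eta_1=\tfrac{1-p^{*}}{4}\equiv 0\pmod 3$. Therefore one of $\eta_0,\eta_1$ vanishes in $\F_3$, i.e.\ one of $c_1,c_2$ is $0$, and $\Phi_p$ has a traceless factor. Combining the two inclusions yields $E(\ell)=\{p:\ell\text{ is not a primitive root }\pmod p\}$ for $\ell\in\{2,3\}$, and I expect the verification of the Gaussian-period identity modulo $3$ to be the only step requiring real arithmetic content.
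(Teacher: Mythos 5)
Your proposal is correct, and it supplies exactly the content that the paper itself does not spell out: the paper's ``proof'' of \lemref{lemE23} consists of the remark that $\tr(\Phi_p)=-1$, so a traceless factor forces reducibility, together with the assertion that for $\ell\in\{2,3\}$ reducibility of $\Phi_p$ conversely yields a traceless factor, the details being deferred to \cite[Lemma 3]{Da}. Your forward inclusion coincides with the paper's. For the converse, your partial-sum pigeonhole on $s_0,s_1,\dots,s_k$ is precisely the standard proof that the Davenport constant of $\F_\ell$ equals $\ell$, i.e.\ you are re-deriving the zero-sum mechanism that the paper invokes (via \cite{Ro}) in \lemref{lemErIr}; note that the properness of $S$ is automatic from $s_k=1\neq 0=s_0$, as you observe. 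This disposes of $\ell=2$ (where $k\geq 2$ suffices; even more simply, if no $c_i$ vanishes then all $c_i=1$ in $\F_2$ and any pair multiplies to a traceless factor) and of $\ell=3$ with $k\geq 3$, leaving only $\ell=3$, $k=2$, which is indeed the one case not covered by the paper's own \lemref{lemErIr} and is where Daileda's argument also does its real work. Your treatment of it is sound: $k=2$ forces $\langle 3\rangle$ to be the subgroup of quadratic residues, so the Frobenius $x\mapsto x^3$ fixes each coset sum and the periods $\eta_0,\eta_1$ lie in $\F_3$ with $c_i=-\eta_i$; the identity $\eta_0\eta_1=(1-p^{*})/4$ is legitimate in characteristic $3$, either by reducing the integral period polynomial $X^2+X+\frac{1-p^{*}}{4}$ modulo a prime of $\Q(\sqrt{p^{*}})$ above $3$ (which splits since $3$ is a quadratic residue mod $p$, making the reductions of the two period factors exactly $g_1,g_2$), or directly from $\eta_0+\eta_1=-1$ and the Gauss-sum identity $(\eta_0-\eta_1)^2=p^{*}$, which holds in $\overline{\F}_3$ since $p\neq 3$; and the reciprocity computation $3$ a residue $\Rightarrow p\equiv\pm1\pmod{12}\Rightarrow p^{*}\equiv 1\pmod{12}\Rightarrow c_1c_2=\eta_0\eta_1\equiv 0\pmod 3$ is correct, ruling out $(c_1,c_2)=(2,2)$. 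One pedantic point, which you share with the paper rather than introduce: as literally stated the right-hand set contains $p=\ell$ (an integer is never a primitive root modulo a prime dividing it), while $\ell\notin E(\ell)$; both you and the paper silently set $p=\ell$ aside, and this convention should be made explicit if the argument is written out in full.
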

However, Daileda showed that the above lemma may not hold if $\ell \geq 5$. For instance, $5$ is not a primitive root $(\mod 11)$ but $11 \not\in E(5)$.

\medspace

\section{Artin's primitive root conjecture: Upper bound}
In this section, we will give a proof of \thmref{thmArtin} following Hooley's method.
\subsection{Proof of \thmref{thmArtin}} Let $a$ be a non-zero integer not equal to $\pm 1$ or a perfect square and
$$
N_a(x) ~=~ \#\{ p  \leq x ~:~ a \text{ is a primitive root } (\mod p)\}.
$$
A necessary and sufficient condition for $a$ to be a  primitive root $(\mod p)$ is 
$$
a^{\frac{p-1}{q}} ~\not\equiv~ 1 ~(\mod p)
$$
for every prime $q$ dividing $p-1$. Let $L_q = \Q(\zeta_q, a^{1/q})$, where $\zeta_q$ denotes a primitive $q$-th root of unity. Then $L_q$ is a Galois extension over $\Q$. According to a principle of Dedekind, 
$$
p ~\text{ splits completely in } L_q 
~\iff~ 
a^{\frac{p-1}{q}} ~\equiv~ 1 ~(\mod p).
$$
For any square-free positive integer $k$, let
$$
L_k ~=~ \prod_{q \mid k} L_q
$$
be the compositum of the fields $L_q$ for primes $q \mid k$. Note that
$$
N_a(x)  ~\leq~ \#\{ p \leq x ~:~ p \text{ does not split completely in } L_d \text{ for any } d \mid k, d > 1 \}.
$$
By the inclusion-exclusion principle, we deduce that for any $k$,
\begin{equation}\label{eqUNa}
N_a(x) ~\leq~ \sum_{d \mid k} \mu(d) \pi_d(x),
\end{equation}
where
$$
\pi_d(x) ~=~ \#\{ p \leq x ~:~ p \text{ splits completely in } L_d\}.
$$
As mentioned in the subsection \ref{secArtin}, $p$ splits completely in $L_d$ if and only if $p$ is unramified in $L_d$ and the Artin symbol $\(\frac{L_d/\Q}{p}\)$ is trivial. By a result of Lagarias and Odlyzko (see \thmref{thmLO}), there exist absolute constants $c_1$ and $c_2$ such that if $x \geq \exp\(10 n_d (\log D_d)^2\)$, then 
\begin{equation}\label{eqLO}
\left| \pi_d(x) - \frac{\Li(x)}{n_d} \right| 
~\leq~ 
\frac{\Li(x^{\beta_d})}{n_d} + 
c_1x \exp\(-c_2 \sqrt{\frac{\log x}{n_d}} \),
\end{equation}
where $\beta_d$ denotes the Siegel zero of the Dedekind zeta function $\zeta_{L_d}(s)$ (if it exists).
Here $n_d= [L_d : \Q]$ and $D_d$ denotes the absolute value of the discriminant of $L_d$.
Note that for any $d \mid k$,
\begin{equation}\label{eqund}
n_d ~\leq~ \prod_{q \mid d} n_q ~\leq~ \prod_{q \mid d} q(q-1) ~=~ d \varphi(d). 
\end{equation}
From \cite[Eq. 12]{Ho}, we get
\begin{equation}\label{eqlnd}
n_d ~\gg~ d \varphi(d),
\end{equation}
where the implied constant may depend on $a$.
By \propref{propDL} and \eqref{eqund}, we get
\begin{equation}
\log D_d ~\leq~ d \varphi(d)  \sum_{p \mid ad} \log p ~+~ d \varphi(d) \log (d \varphi(d)) ~\ll~ d \varphi(d) \log d,
\end{equation}
where the implied constant depends only on $a$. Thus we have
\begin{equation}
n_d (\log D_d)^2 ~\ll~ (d\varphi(d))^3 \log^2 d.
\end{equation}
By Stark's bound (see \thmref{thmStb}), \eqref{eqund}  and \eqref{eqlnd}, we get
\begin{equation}\label{equbeta}
\beta_d ~<~ \max\(1 - \frac{1}{4 \log D_d},~ 1- \frac{c_3}{ D_d^{1/n_d}}\)
~<~  \max\(1 - \frac{\tilde{c}_3}{d \varphi(d) \log d},~ 1- \frac{\tilde{c}_3}{d^{c_4}}\).
\end{equation}
for some positive constants $\tilde{c}_3$ and $c_4 > 1$ depending on $a$. For any integer $n > 1$, set $\log_n x = \log\(\log_{n-1} x\)$ and $\log_1 x = \log x$.
We choose
$$
z ~=~ \frac{\log\log x}{8c_4} \phantom{mm}\text{and}\phantom{mm} k ~=~ \prod_{ q \leq z} q.
$$
Then for any $d \mid k$, we have 
$$
\exp\(10 n_d (\log D_d)^2\)~\leq~ x
$$
for all sufficiently large $x$.  Hence from \eqref{eqLO}, we get
\begin{eqnarray}\label{eqAsypid}
\pi_d(x) &~=~& \frac{\Li(x)}{n_d} ~+~
O\(\frac{x}{n_d} \exp\(- (\log x)^{4/7}\)\) ~+~ 
O\( x \exp\( -(\log x)^{1/3}\) \) \nonumber \\ 
&~=~& \frac{\Li(x)}{n_d} ~+~ O\( x \exp\(-(\log x)^{1/3}\) \).
\end{eqnarray}
From \eqref{eqUNa} and \eqref{eqAsypid}, we get
\begin{eqnarray*}
N_a(x) 
&~\leq~& \Li(x) \sum_{d \mid k} \frac{\mu(d)}{n_d} 
~+~ O\( \sum_{d \mid k} x \exp\(-(\log x)^{1/3}\)\)  \nonumber \\
&~=~& \Li(x) \sum_{d \mid k} \frac{\mu(d)}{n_d} 
~+~ O\(x \exp\(-(\log x)^{1/4}\)\),
\end{eqnarray*}
since the number of divisors of $k$ is $2^{\pi(z)}$ which is negligible in the summation.
By partial summation, we get
\begin{eqnarray*}
\sum_{d \mid k} \frac{\mu(d)}{n_d} 
&~=~& \sum_{d \leq z} \frac{\mu(d)}{n_d} ~+~ \sum_{d \mid k \atop d > z} \frac{\mu(d)}{n_d} 
~=~\sum_{d=1}^{\infty} \frac{\mu(d)}{n_d} ~+~ 
O\(\sum_{d > z} \frac{\log\log d}{d^2}\)  \\
&~=~& \sum_{d=1}^{\infty} \frac{\mu(d)}{n_d} ~+~
O\( \frac{\log_4 x}{\log\log x}\)
~=~ A(a)  ~+~
O\( \frac{\log_4 x}{\log\log x}\).
\end{eqnarray*} 
Hence we deduce that 
$$
N_a(x) ~\leq~ A(a)~ \Li(x) 
~+~ O\( \frac{x \log_4 x}{\log x \log_2 x}\).
$$
This completes the proof of \thmref{thmArtin}. \qed

\medspace

\subsection{Proof of \thmref{thmG}}
Let $a$ be a positive integer which is not an $\ell$-th power for any prime $\ell$ and let $r$ be a positive integer. Also let
$$
N_{a,r}(x) ~=~ \#\{ p \leq x ~:~ p \equiv 1 ~(\mod r) \text{ and order of } a ~(\mod p)~ \text{ is } (p-1)/r\}.
$$
Let $L_r = \Q(\zeta_r, a^{1/r})$ and $L_{rq}  = \Q(\zeta_{rq}, a^{1/rq})$ for any prime $q$. For any square-free positive integer $k$, let
$$
L_{rk} ~=~ \prod_{q \mid k} L_{rq}
$$
be the compositum of the fields $L_{rq}$ for primes $q \mid k$. By a principle of Dedekind, we have $p ~\equiv~ 1 ~(\mod r)$  and order of  $a ~(\mod p)$ is equal to $(p-1)/r$ if and only if $p$ splits completely in $L_r$ but not in $L_{rq}$ for any prime $q$. Note that
\begin{equation*}
N_{a,r}(x) ~\leq~ \#\{ p \leq x ~:~ p \text{ splits completely in $L_r$ but not in $L_{rd}$  for } d \mid k,~ d>1 \}
\end{equation*}
By the inclusion-exclusion principle, we get, as before,
\begin{equation}\label{eqInc-ExcLrd}
N_{a,r}(x) ~\leq~ \sum_{d \mid k} \mu(d) \pi_{rd}(x),
\end{equation}
where
$$
\pi_{rd}(x) ~=~\#\{p \leq x ~:~ p \text{ splits completely in } L_{rd}\}.
$$
Let $n_{rd} ~=~ [L_{rd} : \Q]$ and $D_{rd}$ denote the absolute value of the discriminant of $L_{rd}$. By \thmref{thmLO}, if $x \geq \exp\(10 n_{rd} (\log D_{rd})^2\)$, then
\begin{equation}\label{eqChebLrd}
\left|
\pi_{rd}(x) - \frac{\Li(x)}{n_{rd}}
\right|
~\leq~
 \frac{\Li(x^{\beta_{rd}})}{n_{rd}}
 ~+~ c_1 \exp\(-c_2 \sqrt{\frac{\log x}{n_{rd}}}\).
\end{equation}
From \cite[p. 1283]{FM}, we have
\begin{equation}\label{eqnrd}
n_{rd} ~=~ \frac{rd \varphi(rd)}{m(d)}, \phantom{mm} m(d) ~=~
\begin{cases}
	2 & \text{if $rd$ is even and } \sqrt{-a} \in \Q\(\zeta_{2rd}\) \\
	1 & \text{otherwise.}
\end{cases}
\end{equation}
Thus $n_{rd} \leq rd \varphi(rd)$ and by \propref{propDL},
\begin{equation}\label{eqDrd}
\log (D_{rd}) ~\ll~ rd \varphi(rd) \log (rd),	
\end{equation}
where the implied constant depends only on $a$.
By using Stark's bound (see \thmref{thmStb}), we get
\begin{equation}\label{eqbetard}
\beta_{rd} ~\leq~ \max\( 1- \frac{c_5}{(rd)^2 \log(rd)},~ 1 -\frac{c_5}{(rd)^{c_6}} \),
\end{equation}
where $c_5$ and $c_6>1$ are positive constants depending on $a$. We choose
$$
z~=~ \frac{\log\log x}{8c_6} ~-~ 2\log r 
\phantom{mm}\text{and}\phantom{mm} 
k~=~ \prod_{q \leq z} q
$$
and we suppose that $x$ is sufficiently  large. By \eqref{eqChebLrd}, \eqref{eqnrd}, \eqref{eqDrd} and \eqref{eqbetard}, we get
\begin{equation}\label{eqpirdasy}
\pi_{rd}(x) ~=~ \frac{\Li(x)}{n_{rd}} ~+~O\(x \exp(-(\log x)^{1/3})\)
\end{equation}
for all sufficiently large $x$. By \eqref{eqInc-ExcLrd} and \eqref{eqpirdasy} and using partial summation, we get
\begin{eqnarray*}
N_{a,r}(x) 
&~\leq~& 
\Li(x) \sum_{d \mid k } \frac{\mu(d)}{n_{rd}} ~+~ O\(x \exp(-(\log x)^{1/4})\) \\
&~=~& 
\Li(x) \sum_{d=1}^{\infty} \frac{\mu(d) m(d)}{rd \varphi(rd)}  ~+~O\( \frac{x \log_4 x}{\log x \log_2 x}\) \\
&~=~&
A(a,r)~ \Li(x) ~+~ O\( \frac{x \log_4 x}{\log x \log_2 x}\)
\end{eqnarray*}
for all sufficiently large $x$.
This completes the proof of \thmref{thmG}. \qed

\medspace

\section{The density of $E(\ell)$}
Let $\ell$ be a prime number and $r$ be a positive integer.  Set
$$
I_r(\ell) ~=~ \left\{ p ~:~ [\F_{p}^\times : \langle \ell \rangle ] = r\right\}
\phantom{mm}\text{and}\phantom{mm}
E_r(\ell) ~=~ \left\{ p ~:~ p \in E(\ell) \text{ and } [\F_{p}^\times : \langle \ell \rangle ] = r \right\}.
$$
It is clear that $E_r(\ell) \subseteq I_r(\ell)$ for any positive integer $r$ and we will show that the equality holds if $r$ is sufficiently large (see the remarks in \cite[Sec. 7]{Da}). More precisely, we prove the following lemma.
\begin{lem}\label{lemErIr}
Let $\ell$ be a prime number and $r$ be a positive integer. 
If $r \geq \ell$, then we have $E_r(\ell) = I_r(\ell)$.
\end{lem}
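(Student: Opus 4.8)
The plan is to prove the nontrivial inclusion $I_r(\ell) \subseteq E_r(\ell)$, since $E_r(\ell) \subseteq I_r(\ell)$ holds by definition for every $r$. So fix a prime $p \in I_r(\ell)$. Then $\ell$ is invertible modulo $p$ (in particular $\ell \neq p$) and the order of $\ell \pmod p$ equals $(p-1)/r$. By \lemref{lemfactPhi}, $\Phi_p(X)$ factors in $\F_\ell[X]$ as a product $f_1 \cdots f_r$ of $r$ distinct monic irreducible polynomials, each of degree $(p-1)/r$. In view of \lemref{lemredXnPhip0}, it suffices to exhibit a traceless factor of $\Phi_p$ in $\F_\ell[X]$, for this shows $p \in E(\ell)$ and hence $p \in E_r(\ell)$.

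First I would record the relevant trace data. For a monic polynomial the trace defined above equals the negative of its subleading coefficient, i.e. the sum of its roots; consequently the trace is additive on products of monic polynomials. Writing $\sigma_i = \tr(f_i) \in \F_\ell$, additivity gives $\sum_{i=1}^{r} \sigma_i = \tr(\Phi_p) = -1$. A factor of $\Phi_p$ is exactly a product $\prod_{i \in S} f_i$ over a subset $S \subseteq \{1, \dots, r\}$, and such a factor is traceless precisely when $\sum_{i \in S} \sigma_i = 0$. Note that $S$ cannot then be all of $\{1, \dots, r\}$, since the full sum is $-1 \neq 0$ in $\F_\ell$; so any traceless factor produced this way is automatically a proper nonconstant divisor of $\Phi_p$.

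The crux is a pigeonhole argument on partial sums. Consider $s_0 = 0$ and $s_j = \sigma_1 + \cdots + \sigma_j$ for $1 \leq j \leq r$; these are $r+1$ elements of $\F_\ell$. Since $r \geq \ell = |\F_\ell|$, we have $r + 1 > \ell$, so two of them must coincide, say $s_j = s_k$ with $0 \leq j < k \leq r$. Then the block $S = \{j+1, \dots, k\}$ is nonempty and $\sum_{i \in S} \sigma_i = s_k - s_j = 0$, so $\prod_{i \in S} f_i$ is the desired traceless factor. I expect this pigeonhole step to be essentially the only point of substance: the hypothesis $r \geq \ell$ is exactly what forces the number of partial sums to exceed $|\F_\ell|$, whereas for $r < \ell$ the partial sums could a priori all be distinct and the argument would break down (consistent with Daileda's observation that $E_r(\ell) = I_r(\ell)$ can fail for small $r$). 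With the traceless factor in hand, \lemref{lemredXnPhip0} gives $p \in E(\ell)$, hence $p \in E_r(\ell)$, proving $I_r(\ell) \subseteq E_r(\ell)$ and therefore $E_r(\ell) = I_r(\ell)$ for $r \geq \ell$.
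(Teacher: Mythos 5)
Your proof is correct, and its overall skeleton matches the paper's: both reduce the claim to exhibiting a nonempty subset $S \subseteq \{1, \dots, r\}$ of the irreducible factors $f_1, \dots, f_r$ of $\Phi_p$ (furnished by \lemref{lemfactPhi}) whose traces sum to zero, so that $\prod_{i \in S} f_i$ is the traceless factor required by \lemref{lemredXnPhip0}. The difference lies in how the zero-sum subset is produced. The paper invokes the Davenport constant: it cites Rogers' theorem that $D(\F_\ell) = \ell$, so a sequence of $r \geq \ell$ elements of $\F_\ell$ automatically contains a zero-sum subsequence. You instead run the pigeonhole argument on the prefix sums $s_0 = 0, s_1, \dots, s_r$: since $r + 1 > \ell$, two prefix sums coincide and the intervening block sums to zero. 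This is precisely the standard proof of the upper bound $D(\Z/\ell\Z) \leq \ell$, which is the only direction of Rogers' result the argument needs; so your proof is self-contained and more elementary, at the cost of obscuring the connection to a named combinatorial invariant (the sharpness $D(\F_\ell) = \ell$, i.e.\ the matching lower bound, is what explains why the method cannot work for $r < \ell$, a point you correctly gesture at). You also add two small touches of rigor the paper leaves implicit: the explicit justification that the trace is additive on products of monic polynomials, and the observation that any zero-sum subset must be proper because $\tr(\Phi_p) = -1 \neq 0$ in $\F_\ell$, so the resulting traceless divisor is a genuine proper factor.
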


\begin{proof}
Suppose that $r \geq \ell$. It is sufficient to prove that $I_r(\ell) \subseteq E_r(\ell)$. Let $p \in I_r(\ell)$ be a prime number. By \lemref{lemredXnPhip0}, it is sufficient to show that $\Phi_p(X)$ has a traceless factor in $\F_{\ell}[X]$.
Since $[\F_{p}^\times : \langle \ell \rangle ] = r$, by \lemref{lemfactPhi}, the polynomial $\Phi_p(X)$ factors into $r$ distinct irreducible factors in $\F_{\ell}[X]$, say
$$
\Phi_p ~=~ f_1 f_2 \cdots f_r.
$$
Let $a_i = \tr(f_i) \in \F_{\ell}$ for $1 \leq i \leq r$. Let $D(\F_{\ell})$ denote the Davenport constant of $\F_{\ell}$ which is defined to be the smallest positive integer $n$ such that any sequence in $\F_{\ell}$ of length at least $n$ contains a sub-sequence whose sum is $0$.
We know that $D(\F_{\ell}) = \ell$ (see \cite[Theorem 1]{Ro}). Since $r \geq \ell$, the sequence $(a_i)_{i=1}^{r}$ contains a zero-sum sub-sequence i.e., there exists a subset $ \cJ \subseteq \{1, 2, \cdots, r\}$ such that
$$
\sum_{j \in \cJ} a_{j} ~=~ 0.
$$
This implies that 
$$
\tr\(\prod_{j \in \cJ} f_j\) ~=~ \sum_{j \in \cJ} a_{j} ~=~ 0.
$$
This completes the proof of \lemref{lemErIr}.
\end{proof}

\medskip
The following lemma gives a lower bound for the density of $E(\ell)$ whenever $\ell \in \{2, 3\}$.

\begin{lem}\label{lemell23}
Let $\ell \in \{2, 3\}$, then the set 
$$
E(\ell)  ~=~ \left\{ p  ~:~ X^p -1 \text{ is reducible in } \F_{\ell}[X;M] \right\}
$$ 
has lower density at least
$$
1~-~ \prod_{p}\(1- \frac{1}{p(p-1)}\).
$$
\end{lem}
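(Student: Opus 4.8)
The plan is to combine Lemma~\ref{lemE23} with the unconditional upper bound of Theorem~\ref{thmArtin}. By Lemma~\ref{lemE23}, for $\ell \in \{2,3\}$ the set $E(\ell)$ consists precisely of those primes $p$ for which $\ell$ is \emph{not} a primitive root $(\mod p)$. Hence, up to the single prime $p = \ell$, which is negligible for density purposes, $E(\ell)$ is the complement within the primes of the set $S(\ell) = \{ p : \ell \text{ is a primitive root } (\mod p)\}$. Measuring densities relative to the primes, we have $\underline{d}(E(\ell)) = 1 - \overline{d}(S(\ell))$, so it suffices to bound $\overline{d}(S(\ell))$ from above.

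First I would apply Theorem~\ref{thmArtin} with $a = \ell$. Since $\ell \in \{2,3\}$ is neither $\pm 1$ nor a perfect square, the theorem applies and gives $\overline{d}(S(\ell)) \leq A(\ell)$. It then remains to evaluate $A(\ell)$ explicitly and to verify that it equals Artin's constant $\prod_p (1 - 1/(p(p-1)))$.

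The evaluation of $A(\ell)$ is the crux. Recall $A(a) = \delta(a) \prod_p (1 - 1/n_p)$ with $n_p = [L_p : \Q]$ and $L_p = \Q(\zeta_p, a^{1/p})$. For $a = \ell \in \{2,3\}$, writing $\ell = b c^2$ with $b$ square-free forces $b = \ell$ and $c = 1$; since $\ell \equiv 2$ or $3 ~(\mod 4)$, we land in the case $b \not\equiv 1 ~(\mod 4)$, so $\delta(\ell) = 1$. For the degrees, $[\Q(\zeta_p):\Q] = p - 1$, while $X^p - \ell$ is irreducible over $\Q$ because $\ell$ is a prime integer, giving $[\Q(\ell^{1/p}):\Q] = p$. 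As $n_p$ is then a common multiple of $p - 1$ and $p$, and is bounded above by $p(p-1)$, coprimality of $p$ and $p-1$ forces $n_p = p(p-1)$ for every prime $p$ (the small cases $p = 2, 3$ are checked directly). Substituting $\delta(\ell) = 1$ and $n_p = p(p-1)$ yields $A(\ell) = \prod_p (1 - 1/(p(p-1)))$.

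Putting the two ingredients together gives $\underline{d}(E(\ell)) = 1 - \overline{d}(S(\ell)) \geq 1 - A(\ell) = 1 - \prod_p (1 - 1/(p(p-1)))$, which is the asserted lower bound. I expect the genuine difficulty to lie entirely in Theorem~\ref{thmArtin}, whose unconditional removal of GRH from the upper bound in Hooley's argument does the real work here; once that is in hand, the present lemma is an exercise in identifying $A(\ell)$ with the Artin constant for $\ell \in \{2,3\}$ and in translating the primitive-root statement into a statement about $E(\ell)$ via Lemma~\ref{lemE23}.
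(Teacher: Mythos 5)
Your proposal is correct and follows essentially the same route as the paper: identify $E(\ell)$ via Lemma~\ref{lemE23} with the complement of the set of primes having $\ell$ as a primitive root, apply Theorem~\ref{thmArtin} with $a=\ell$, and identify $A(\ell)$ with Artin's constant. The only difference is that you spell out the evaluation $\delta(\ell)=1$ and $n_p=p(p-1)$, which the paper simply asserts by calling $A(\ell)$ Artin's constant; this extra detail is accurate and harmless.
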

\begin{proof}
Suppose that $\ell \in \{2, 3\}$. By \lemref{lemE23}, we know that $p \in E(\ell)$ if and only if $\ell$ is not a primitive root $(\mod p)$. By \thmref{thmArtin}, the set of primes $p$ such that $\ell$ is a primitive root $(\mod p)$ has upper density at most $A(\ell)=A$, where $A$ is Artin's constant given by
$$
A ~=~ \prod_{p}\(1- \frac{1}{p(p-1)}\).
$$
Hence we deduce that the set $E(\ell)$ has lower density at least $1- A$ whenever $\ell \in \{2,3\}$.
\end{proof}

\medspace

Let $q$ be a prime number. We remark that \cite[Lemma 4]{RM} gives an asymptotic expression for the number of primes $p \leq x$ such that $q \mid [\F_p^\times : \Gamma_p]$, where $\Gamma$ is a finitely generated subgroup of $\Q^\times$ and  $\Gamma_p = \Gamma ~(\mod p)$. Using similar techniques and the inclusion-exclusion principle, we will give a proof \thmref{thmindex}.

\subsection{Proof of \thmref{thmindex}}
Let $\ell$ be a prime number. If $\ell = 2$, then
$$
\left\{ p  ~:~ [\F_p^\times : \langle 2 \rangle] \geq 2 \right\}
~=~
\{ p ~:~ 2 \text{ is not a primitive root }(\mod p)\}.
$$
Then, by \thmref{thmArtin}, the set of primes $p$ such that $ [\F_p^\times : \langle 2 \rangle] \geq 2$ has lower density at least 
$$
1~-~A(2) ~=~ 1 ~-~ \prod_{p} \(1 - \frac{1}{p(p-1)}\).
$$
Henceforth, we assume that $\ell$ is an odd prime.
Note that for any prime $q$,
\begin{eqnarray*}
q \mid [\F_p^\times : \langle \ell \rangle] 
&~\iff~&
\ell^{\frac{p-1}{q}} ~\equiv~ 1 ~(\mod p) \\
&~\iff~&
p \text{ splits completely in } K_q = \Q(\zeta_q, \ell^{1/q}).
\end{eqnarray*}
For any square-free natural number $k$, let
$$
K_k ~=~ \prod_{q \mid k} K_q
$$
be the composition of the fields $K_q$ for primes $q \mid k$. Let $m_k$ and $D_k$ denote the degree and the absolute value of the discriminant of $K_k$ respectively. We first note that for any prime $q \geq \ell$,
$$
q \mid [\F_p^\times : \langle \ell \rangle]  ~\implies~ [\F_p^\times : \langle \ell \rangle] ~\geq~ \ell
$$
We choose
$$
k ~=~ \prod_{\ell \leq q \leq z} q,
$$
where $z$ is a positive real number which will be chosen later. Note that
$$
\#\left\{ p \leq x ~:~ [\F_p^\times ~:~ \langle \ell \rangle] ~\geq~ \ell \right\}
~\geq~ 
\#\left\{ p \leq x ~:~ q \mid [\F_p^\times ~:~ \langle \ell \rangle] \text{ for some prime } q \mid k\right\}.
$$
Let $S_\ell(x) = \#\{ p \leq x ~:~ [\F_p^\times : \langle \ell \rangle] \geq \ell \}$. By the inclusion-exclusion principle, we deduce that
\begin{equation}\label{eqlowIndell}
S_\ell(x) 
~\geq~ 
- \sum_{d \mid k \atop d>1} \mu(d) \pi_d(x),
\end{equation}
where
$$
\pi_d(x) ~=~\#\{ p \leq x ~:~ p \text{ splits completely in } K_d\}.
$$
By \thmref{thmLO}, if $x \geq \exp\(10m_d (\log D_d)^2\)$, then we have
\begin{equation}\label{eqLOK}
\left| \pi_d(x) - \frac{\Li(x)}{m_d} \right| 
~\leq~ 
\frac{\Li(x^{\beta_d})}{m_d} + 
c_1x \exp\(-c_2 \sqrt{\frac{\log x}{m_d}} \),
\end{equation}
where $\beta_d$ denotes the Siegel zero of the Dedekind zeta function $\zeta_{K_d}(s)$ (if it exists).
By \cite[Eq. 12]{Ho}, we have $m_d = d \varphi(d)$ for any $d \mid k$ and by \propref{propDL}, we get
\begin{equation}\label{eqDiscKd}
\log D_d ~\ll~ d \varphi(d) \log d,
\end{equation}
where the implied constant depends on $\ell$. Hence
$$
m_d (\log D_d)^2 ~\ll~ d^6 (\log d)^2 
\phantom{mm}\text{and}\phantom{mm}
D_d^{1/m_d} ~=~ \exp\(\frac{\log D_d}{m_d}\) ~\leq~ d^{c_5} 
$$
for some constant $c_5 > 1$ (depending on $\ell$).
We now chose
$$
z ~=~ \frac{\log\log x}{8c_5}.
$$
Then for any $d \mid k$, we get 
$$
\exp\(10m_d (\log D_d)^2\) ~\leq~ x  \phantom{mm}\text{and}\phantom{mm}
\Li(x^{\beta_d}) ~\leq~ x \exp\(-(\log x)^{4/7}\)
$$
for all sufficiently large $x$. Thus from \eqref{eqlowIndell} and \eqref{eqDiscKd}, we deduce that
\begin{eqnarray*}
S_\ell(x) 
&~\geq~& 
-\sum_{d \mid k \atop d>1} \mu(d) 
\(\frac{\Li(x)}{d \varphi(d)} ~+~ x \exp(-(\log x)^{1/3})\) \\
&~=~&
-\Li(x) \sum_{d \mid k \atop d>1} \frac{\mu(d)}{d \varphi(d)} 
~+~ O\(x \exp(-(\log x)^{1/4})\) \\
&~=~&
\Li(x) \(1- \sum_{d \mid k } \frac{\mu(d)}{d \varphi(d)} \)
~+~ O\(x \exp(-(\log x)^{1/4})\).
\end{eqnarray*}
By partial summation, we deduce that
$$
\sum_{d \mid k } \frac{\mu(d)}{d \varphi(d)} ~=~ \sum_{d=1 \atop p \mid d \Rightarrow p \geq \ell}^{\infty} \frac{\mu(d)}{d \varphi(d)} ~+~ O\(\frac{\log_4 x}{\log\log x}\).
$$
Hence we conclude that the set of primes $p$ such that $[\F_p^\times : \langle \ell \rangle] \geq \ell$ has lower density at least
$$
1 ~-~ \sum_{d=1 \atop p \mid d \Rightarrow p \geq \ell}^{\infty} \frac{\mu(d)}{d \varphi(d)} ~=~ 1 ~-~ \prod_{p \geq \ell} \(1- \frac{1}{p(p-1)}\).
$$
This completes the proof of \thmref{thmindex}. \qed

\medspace

Now we are ready to give a proof of \thmref{thm2}.
\subsection{Proof of \thmref{thm2}}
Let $\ell$ be a prime number and 
$$
E(\ell) ~=~ \left\{ p  ~:~ X^p -1 \text{ is reducible in } \F_{\ell}[X;M] \right\}.
$$
By \lemref{lemredXnPhip0}, if $p \in E(\ell)$, then $\Phi_p(X)$ has a traceless factor in $\F_{\ell}[X]$ and in particular, $\Phi_p(X)$ is reducible in $\F_{\ell}[X]$, since $\tr(\Phi_p) = -1$. Thus by \lemref{lemfactPhi}, $[\F_p^{\times} : \langle \ell \rangle] \geq 2$ whenever $p \in E(\ell)$. Thus we have
$$
E(\ell) ~=~ \bigcup_{r=2}^{\infty} E_r(\ell).
$$
By \lemref{lemErIr}, we get
$$
E(\ell) ~\supseteq~ 
\bigcup_{r= \ell}^{\infty} I_r(\ell) 
~=~ \left\{ p ~:~ [\F_p^{\times} ~:~ \langle \ell \rangle] ~\geq~ \ell \right\}.
$$
Now from \thmref{thmindex}, we deduce that the set $E(\ell)$ has lower density at least
$$
 1 ~-~ \prod_{p \geq \ell} \(1- \frac{1}{p(p-1)}\).
$$
This completes the proof of \thmref{thm2}. \qed

\section{Concluding remarks}
In a plethora of Artin-type questions, one will be estimating the density of the set of primes $p$ that split completely in $K$ but not in $L_\alpha$ for every $\alpha \in \cF$, where $K$ and $L_\alpha$ are finite Galois extensions of $\Q$ and $\cF$ is an infinite indexing set. Hence, one can apply the method used in the proof of \thmref{thmArtin} to derive an upper bound for the density of the set of such primes unconditionally.

\section*{Acknowledgments}
The author would like to thank M. Ram Murty for his guidance, support, and suggestion of removing the generalized Riemann hypothesis in Hooley's upper bound and other helpful suggestions on an earlier version of this article. The author would also like to thank Queen's University, Canada, for providing an excellent atmosphere to work.

\medspace

\end{document}